\newcommand{\noun}[1]{\textsc{#1}}
\numberwithin{equation}{section}
\numberwithin{figure}{section}
\theoremstyle{plain}
\newtheorem{thm}{\protect\theoremname}[section]
\theoremstyle{remark}
\newtheorem{rem}[thm]{\protect\remarkname}
\theoremstyle{definition}
\newtheorem{defn}[thm]{\protect\definitionname}
\theoremstyle{definition}
\newtheorem{example}[thm]{\protect\examplename}
\theoremstyle{plain}
\newtheorem{cor}[thm]{\protect\corollaryname}
\theoremstyle{plain}
\newtheorem{prop}[thm]{Proposition}
\def\d{\partial}
\def\uo{\underline{0}}
\def\ul{\underline{\ell}}
\def\un{\underline{n}}
\def\uk{\underline{\kappa}}
\def\utt{\underline{\tau}}
\def\ut{\underline{t}}
\def\uv{\underline{v}}
\def\um{\underline{m}}
\def\uy{\underline{y}}
\def\ux{\underline{x}}
\def\QQ{\mathbb{Q}}
\def\RR{\mathbb{R}}
\def\CC{\mathbb{C}}
\def\ZZ{\mathbb{Z}}
\def\PP{\mathbb{P}}
\def\DD{\mathbb{D}}
\def\GG{\mathbb{G}}
\def\uz{\underline{z}}
\def\ut{\underline{t}}
\def\W{\mathcal{W}}
\def\VV{\mathbb{V}}
\def\F{\mathcal{F}}
\def\V{\mathcal{V}}
\def\X{\mathcal{X}}
\def\M{\mathcal{M}}
\def\H{\mathcal{H}}
\def\NN{\mathbb{N}}
\def\D{\mathcal{D}}
\def\vf{\varphi}
\def\l{\lambda}
\def\t{\lambda}
\def\k12{\mathcal{K}_{\lambda_1,\lambda_2}}
\def\tk12{\tilde{\mathcal{K}}_{\lambda_1,\lambda_2}}
\def\ck12{\check{\mathcal{K}}_{\lambda_1,\lambda_2}}
\def\ay{\mathbf{i}}
\def\sx{\mathsf{X}}
\def\rx{\mathrm{X}}
\theoremstyle{definition}
\theoremstyle{definition}
\theoremstyle{definition}
\newtheorem*{thx}{Acknowledgments}
  \providecommand{\corollaryname}{Corollary}
  \providecommand{\definitionname}{Definition}
  \providecommand{\remarkname}{Remark}
\providecommand{\theoremname}{Theorem}
 \providecommand{\examplename}{Example}
\begin{document}

\title{Motivic irrationality proofs}

\author{Matt Kerr}

\subjclass[2000]{14C25, 14C30, 19E15, 11J82}
\begin{abstract}
We exhibit geometric conditions on a family of toric hypersurfaces
under which the value of a canonical normal function at a point of
maximal unipotent monodromy is irrational.
\end{abstract}
\maketitle

\section{Introduction}\label{S1}

The limit of a generalized normal function at a point where the underlying
variation of Hodge structure degenerates, as recently studied in \cite{7K},
turns out to have an unexpected arithmetic application. R. Ap\'ery's
famous proof (see \cite{vdP}) of irrationality of $\zeta(3):=\sum_{k\geq1}k^{-3}$
relies on the existence of rapidly divergent sequences $a_{m}\in\ZZ$,
$b_{m}\in\QQ$ (the latter having denominators of bounded growth)
with $2a_{m}\zeta(3)+b_{m}$ converging rapidly to zero.  (Here we have
swapped the meanings of $a_m$ and $b_m$ in \cite{vdP} and 
multiplied the latter by $-2$.)
Beukers, Peters and Stienstra \cite{Be,BP,Pe,PS} geometrically repackaged
much of the proof, noting for instance that the generating function
$\sum_{m\geq0}a_{m}\lambda^{m}=:A(\lambda)$ records the period of a
holomorphic 2-form on a family of $K3$ surfaces $\{X^{\lambda}\}_{\lambda\in\PP^{1}}$,
hence must satisfy a Picard-Fuchs differential equation $D_{\text{PF}}A(\lambda)=0$.

Behind the remaining details of the irrationality proof lurks a family
of cycles in (algebraic) $K_{3}$ of the $K3$. The associated higher
normal function $\tilde{V}(\lambda)$ has special value $\tilde{V}(0)=-2\zeta(3)$,
and satisfies the inhomogeneous equation $D_{\text{PF}}\tilde{V}(\lambda)=Y(\lambda)$,
where $Y$ denotes the Yukawa coupling. Setting $\sum_{m\geq1}b_{m}\lambda^{m}:=A(\lambda)\tilde{V}(0)-\tilde{V}(\lambda)$,
one deduces from this recurrence relations on the $\{b_{m}\}$ which
(as presented here) give ``half'' of the required bounded denominator
growth. The other ``half'' comes from the \emph{Fermi family} of
$K3$s studied, but not linked to the Ap\'ery proof, in \cite{PS}.
Finally, the behavior of the cycles at singular members
of the family $\{X^{\lambda}\}$ shows that $\tilde{V}(\lambda)$
has no monodromy about the conifold singular fiber closest to $\lambda=0$,
implying the rapid convergence of $2a_{m}\zeta(3)+b_{m}\to0$.

In this paper, we reveal a general criterion for the irrationality
of special values of certain higher normal functions. (An expository,
much less technical account of the proof for $\zeta(3)$ will appear elsewhere.) Given a Laurent
polynomial $\phi(x_{1},\ldots,x_{n})$ with reflexive Newton polytope
$\Delta$, the equation $\phi(\underline{x})=\lambda$ defines a family
of Calabi-Yau hypersurfaces $\{X^{\lambda}\}$ in the toric variety
$\PP_{\Delta}$. Associated to this family is a pure irreducible variation of Hodge structure $\V_{\phi}$ of weight $(n-1)$ over a Zariski open $\mathcal{U}\subset \PP^1$, together with a canonical section $\{\tilde{\omega}^{\lambda}\}$
of the Hodge line bundle $\F^{n-1}\V_{\phi}$. We call $\phi$ \emph{tempered} if the coordinate
symbol $\{x_{1},\ldots,x_{n}\}$ lifts to a family of motivic cohomology
classes $\Xi^{\lambda}\in H^n_{\mathcal{M}}(X^{\lambda},\QQ(n))$ on the family, producing an extension \begin{equation}\label{ext} 0\to \V_{\phi}(n)\to \mathcal{E}_{\phi} \to \QQ(0)_{\mathcal{U}}\to 0\end{equation}of admissible variations of mixed Hodge structure over $\mathcal{U}$. (This temperedness typically holds, for
example, for Landau-Ginzburg models constructed from Minkowski polynomials \cite{dS}.)
Applying a variant of this hypothesis allows us to construct a canonical
truncated higher normal function $\tilde{V}(\lambda)$ on $\PP^{1}\backslash\phi(\RR_{-}^{\times n})$
by pairing the regulator class of $\Xi^{\lambda}$ (i.e., the extension class of \eqref{ext}) with $\tilde{\omega}^{\lambda}$
(see Theorem \ref{th3}).\footnote{This $\tilde{V}$ may be viewed as a period of the extension \emph{dual} to \eqref{ext}.}

To arrange $\tilde{V}(0)\notin\QQ$, we must impose several additional
conditions on $\phi$, roughly as follows (see Theorem \ref{th2} and Prop. \ref{prop}):
\begin{itemize}[leftmargin={*}]
\item the local system of periods of $\tilde{\omega}^{\lambda}$ must be
of rank $n$, admit an isomorphism to its pullback by $\lambda\mapsto C/\lambda$,
and have two mild singularities apart from $0$ and $\infty$, one
of which is very far from $0$;
\item $\phi(-\underline{x})$ has positive integer coefficients, and the
Picard-Fuchs operator associated to $\tilde{\omega}^{\lambda}$ (suitably
normalized) is integral; and
\item a finite ($r:1$) pullback of the family $X^{\lambda}$ can be presented as
a family of toric hypersurfaces in $\PP_{\Diamond}$, where $\Diamond$
is a ``facile'' polytope (Definition \ref{def1}).
\end{itemize}
The role of the last condition is to produce a basis of periods whose
power-series coefficients have the right denominator bounds (see Corollary
\ref{cor1}). This basis is closely tied to mirror symmetry
\cite{HLY} and the Frobenius method \cite{Yo}; for $n\geq4$, Theorem
\ref{th1} uncovers a surprising arithmetic implication of the
Hyperplane Conjecture \cite{HLY,LZ}, which may be of independent interest.  We also remark that, assuming
only temperedness, the higher normal function $\tilde{V}(\lambda)$
can always be written as one of the chain-integral solutions of \cite{HLYZ}.
Moreover, its asymptotics at $\lambda=\infty$ are related to the Frobenius constants and
gamma-classes of \cite{BV,GGI,GI,GZ},
while $\tilde{V}(0)$ may be interpreted as an Ap\'ery constant as
studied in \cite{Ga,Go}.

In the last section, we exhibit Laurent polynomials which satisfy
all these conditions for $n=1$, $2$, and $3$, recovering irrationality
of $\log(1+b^{-1})$ ($b\in\mathbb{N}$), $\zeta(2)$, and $\zeta(3)$.
We also propose relaxations of some of the conditions, together with
specific families of polynomials, for making contact with results
on linear forms in more than one odd zeta value -- for instance \cite{Va},
\cite{Z1}, \cite{BR}, and especially \cite{Br}, whose basic cellular integrals
on $\mathcal{M}_{0,n+3}$ are the power series coefficients of a $\tilde{V}(\lambda)$
as above.

While the results on $\zeta(2)$ and $\zeta(3)$ complete, in a way,
the story begun in \cite{BP}, the reader familiar with those works
will notice (perhaps deplore?) the complete lack of reference to modular
forms in what follows. The omission is strategic, as a weight-$(n-1)$
VHS $\mathcal{V}_{\phi}$ with maximal unipotent and conifold monodromies
\emph{cannot} have a modular parametrization for $n\geq4$. This is,
of course, precisely where we hope to stimulate the search for examples
with Theorem \ref{th2}, starting with the increasingly sophisticated
databases of polytopes, local systems, Calabi-Yau differential operators 
and their geometric realizations \cite{CYclass,ASZ,Fano1,Fano,DM}.
(It is also the setting where, in articles to come, we will extend
the methods developed below in connection with Ap\'ery and gamma
constants of Fano varieties.)

The same reader may be puzzled by our reference to ``splitting up the bound''
on denominators of $b_m$, as in the Ap\'ery $\zeta(3)$ story one simply shows that
$2(L_m)^3 b_m \in \ZZ$, where $L_m := \text{lcm}\{1,2,\ldots,m\}$.
What we are able to show under \emph{general} hypotheses, using
techniques from Hodge theory and mirror symmetry,
is instead that (for some unknown fixed $\varepsilon\in \mathbb{N}$) both
$\varepsilon (L_{rm})^n b_m$ and $\varepsilon (m!)^n b_m$ are
integers for all $m$. Though weaker than the above bound in the Ap\'ery $\zeta(3)$ ($r=2$, $n=3$) case,
this is still enough for an irrationality proof.
With that said, the results of this article are intended to be a narrow proof of concept
for an approach to irrationality proofs via motivic cohomology and Hodge theory, rather than to be optimal as respects
either methodology or hypotheses.  In particular, the removal of the restriction to normal 
functions arising from $K_n$ of Calabi-Yau $(n-1)$-folds will be considered in a subsequent work.

We freely (though infrequently) use the notation and terminology of
regulator currents (e.g. $T_{\Xi},\Omega_{\Xi},R_{\Xi}$) and toric varieties throughout, as reviewed for instance in
$\S\S$1-2 of \cite{DK}.
The heart of this paper is $\S$\ref{S3} (together with the examples in $\S$\ref{S5}), and no more than a quick perusal of the technical $\S$\ref{S2} 
is needed on a first reading.  The difference in notation ($\Diamond$ vs. $\Delta$) for polytopes in $\S$\ref{S2} and $\S$\ref{S3}
is intended to draw a bright line between the family $X^{\lambda}\subset \PP_{\Delta}$ and its facile $r$-cover $\mathrm{X}_{\xi}\subset \PP_{\Diamond}$.
Similarly, before bothering with $\S$\ref{S4} (where the regulator-current calculus has its main appearance)
the reader may prefer to look at how it is used in $\S\S$\ref{SS5.2}-\ref{SS5.3}.

Throughout we shall write $\partial_t$ resp. $\delta_t$ for $\frac{\partial}{\partial t}$ resp. $t\tfrac{\partial}{\partial t}$.

\begin{thx}
It is my pleasure to thank S. Bloch, F. Brown, C. Doran, V. Golyshev,
B. Lian, V. Maillot, F. Rodriguez-Villegas, D. Zagier and W. Zudilin
for helpful conversations and correspondence -- some recent, and some
further back. This work was partially supported by NSF FRG Grant DMS-1361147 and Simons Collaboration Grant 634268.
\end{thx}

\section{Facile polytopes}\label{S2}

Let $\Diamond,\Diamond^{\circ}$ be a dual pair of reflexive polytopes
in $\RR^{n}$, admitting regular projective triangulations $\mathcal{T},\mathcal{T}^{\circ}$.
Take $\Sigma,\Sigma^{\circ}$ to be the fans on these triangulations,
and $\PP_{\Diamond^{\circ}},\PP_{\Diamond}$ (respectively -- note the reversal) the toric
Fano $n$-folds determined by the fans. Write
\[
\mathcal{A}=\mathcal{A}_{\Diamond}:=\Diamond\cap\ZZ^{n}=\{\underline{v}^{(0)}=\underline{0},\,\underline{v}^{(1)},\ldots,\underline{v}^{(N)}\}
\]
for the integer points, and 
\[
\mathbb{L}:=\left\{ \left.\underline{\ell}=\left(\ell_{0}=-\Sigma_{i=1}^{N}\ell_{i},\,\ell_{1},\ldots,\ell_{N}\right)\in\ZZ^{N+1}\right|\Sigma_{i=1}^{N}\ell_{i}\underline{v}^{(i)}=\underline{0}\right\} 
\]
for the lattice of integral relations on them. Let $x_1,\ldots,x_n$ denote the coordinates on $\mathbb{G}_m^n$ and also their extensions to rational functions on $\PP_{\Diamond}$ and $\PP_{\Diamond^{\circ}}$. The irreducible components
$\{D_{i}\}_{i=1}^{N}$ of $\PP_{\Diamond^{\circ}}\backslash\mathbb{G}_{m}^{n}=\mathbb{D}_{\Diamond^{\circ}}$
(with $\deg_{D_{i}}(x_{k})=v_{k}^{(i)}$) generate $H^{2}(\PP_{\Diamond^{\circ}},\ZZ)$,
and $K_{\PP_{\Diamond^{\circ}}}=-\sum_{i=1}^{N}D_{i}=:D_{0}$.

Assume that the Mori cone $\mathcal{M}=\mathcal{M}_{\Diamond^{\circ}}\subset H_{2}(\PP_{\Diamond^{\circ}},\RR)$
(of classes of effective curves) is regular simplicial, so that $\mathbb{M}:=\mathcal{M}\cap H_{2}(\PP_{\Diamond^{\circ}},\ZZ)=\ZZ_{\geq0}\langle C_{1},\ldots,C_{M=N-n}\rangle.$
Write $C_{j}\mapsto\underline{\ell}^{(j)}=C_{j}\cdot\underline{D}$
for the images under 
\[
H_{2}(\PP_{\Diamond^{\circ}},\ZZ)\overset{\cong}{\to}\mathbb{L}\overset{\cong}{\to}\text{Hom}\left(H^{2}(\PP_{\Diamond^{\circ}},\ZZ),\ZZ\right),
\]
so that the dual (nef) cone $\overline{\mathcal{K}}\subset H^{2}(\PP_{\Diamond^{\circ}},\RR)$
has $\mathbb{K}:=\overline{\mathcal{K}}\cap H^{2}(\PP_{\Diamond^{\circ}},\ZZ)=\ZZ_{\geq0}\langle J_{1},\ldots,J_{M}\rangle$
with $J_{k}\cdot C_{j}=\delta_{kj}$ and $D_{i}=\sum_{k=1}^{M}\ell_{i}^{(k)}J_{k}$.
(To compute the $\underline{\ell}^{(k)}$, one may use primitive collections
as in \cite{LZ}.)

Let $F_{\underline{\mathtt{a}}}(\underline{x}):=\sum_{i=0}^{N}\mathtt{a}_{i}\underline{x}^{\underline{v}^{(i)}}$,
and $X_{\underline{\mathtt{a}}}^{\times}:=\{F_{\underline{\mathtt{a}}}(\underline{x})=\underline{0}\}\subset\mathbb{G}_{m}^{n}$
with (CY $(n-1)$-fold) Zariski closure $X_{\underline{\mathtt{a}}}\overset{\imath}{\hookrightarrow}\PP_{\Diamond}$;
take $X^{\circ}\subset\PP_{\Diamond^{\circ}}$ to be any smooth, $\Diamond^{\circ}$-regular anticanonical
hypersurface. We consider the $\mathbb{A}$\emph{-periods} ($\mathbb{A}=\ZZ,\QQ,\text{ or }\CC$) of
\[
\omega_{\underline{\mathtt{a}}}=\text{Res}_{X_{\underline{\mathtt{a}}}}\Omega_{\underline{\mathtt{a}}}=\tfrac{\mathtt{a}_{0}}{(2\pi{\bf i})^{n-1}}\text{Res}_{X_{\underline{\mathtt{a}}}}\left(\tfrac{dx_{1}/x_{1}\wedge\cdots\wedge dx_{n}/x_{n}}{F_{\underline{\mathtt{a}}}(\underline{x})}\right)\in\Omega^{n-1}(X_{\underline{\mathtt{a}}}),
\]
which are the multivalued functions $\pi_{\gamma}(\underline{\mathtt{a}})=\int_{\gamma}\omega_{\underline{\mathtt{a}}}$ ($\gamma\in H_{n-1}(X_{\underline{\mathtt{a}}},\mathbb{A})$)
defined on a Zariski open in the affine $(N+1)$-space with coordinates $(\mathtt{a}_0,\ldots,\mathtt{a}_N)$.  These periods are known to solve the GKZ system
\[
\tau_{\text{GKZ}}^{\Diamond}:\;\;\begin{array}{c}
\left\{ \left(\prod_{\ell_{i}>0}\partial_{\mathtt{a}_{i}}^{\ell_{i}}-\prod_{\ell_{i}<0}\partial_{\mathtt{a}_{i}}^{-\ell_{i}}\right)\tfrac{1}{\mathtt{a}_{0}}\right\} _{\ell\in\mathbb{L}},\\
\left\{ \left(\Sigma_{i=0}^{N}v_{j}^{(i)}\delta_{\mathtt{a}_{i}}\right)\tfrac{1}{\mathtt{a}_{0}}\right\} _{j=1,\ldots,n},\;\left(\Sigma_{i=0}^{N}\delta_{\mathtt{a}_{i}}+1\right)\tfrac{1}{\mathtt{a}_{0}},
\end{array}
\]
whose remaining solutions are the other integrals of $\frac{1}{2\pi{\bf i}}\Omega_{\underline{\mathtt{a}}}$
over \emph{relative} cycles in $H_{n}\left(\PP_{\Diamond}\backslash X_{\underline{\mathtt{a}}},\DD_{\Diamond}\backslash\DD_{\Diamond}\cap X_{\underline{\mathtt{a}}};\CC\right)$
\cite{HLYZ}.  We are interested in the behavior of the $\pi_{\gamma}$ in the \emph{large complex-structure limit} (LCSL), i.e., where
the $t_{k}:=\underline{\mathtt{a}}^{\underline{\ell}^{(k)}}$ are sufficiently small. Our conclusion will be Theorem \ref{th1} below.

A formula for the solutions to $\tau_{\text{GKZ}}^{\Diamond}$ in the
LCSL was given by \cite{HLY}: writing $\tau_{i}:=\frac{\log(t_{i})}{2\pi{\bf i}}$,
$\mathcal{O}:=\CC[[\underline{t}]][\underline{\tau}]$, they are precisely
the functions $\psi(\mathcal{B}_{\Diamond})\in\mathcal{O}$ where $\psi\in H^{\bullet}(\PP_{\Diamond^{\circ}},\CC)^{\vee}$
and
\[
\mathcal{B}_{\Diamond}:=\sum_{\underline{\ell}\in\mathbb{M}}\mathcal{B}_{\underline{\ell}}(\underline{D})\underline{\mathtt{a}}^{\underline{\ell}+\underline{D}}=\sum_{\underline{n}\in\ZZ_{\geq0}^{M}}\mathbb{B}_{\underline{n}}(\underline{J})\underline{t}^{\underline{n}+\underline{J}}\in H^{\bullet}(\PP_{\Diamond^{\circ}},\mathcal{O}),
\]
with $\mathbb{B}_{\underline{n}}(\underline{J}):=\mathcal{B}_{\sum n_{k}\underline{\ell}^{(k)}}\left(\sum\underline{\ell}^{(k)}J_{k}\right)$
and \begin{equation} \label{3-1}
\mathcal{B}_{\underline{\ell}}(\underline{D}) :=\frac{\prod_{i=1:\ell_i <0}^N D_i (D_i -1 )\cdots (D_i + \ell_i + 1)}{\prod_{i=1:\ell_i >0}^N (D_i + 1)\cdots (D_i + \ell_i )}\times (D_0 -1)\cdots (D_0 + \ell_0).
\end{equation}According to the \emph{Hyperplane Conjecture} \cite{HLY,LZ}, $\psi(\mathcal{B}_{\Diamond})$
is a $\CC$-period (in the above sense) precisely when $\psi$ belongs
to 
\[
\text{im}\left\{ \imath_{*}^{\bullet}:\, H^{\bullet}(X^{\circ},\CC)^{\vee}\to H^{\bullet}(\PP_{\Diamond^{\circ}},\CC)^{\vee}\right\} .
\]

More explicitly, for each $\underline{\kappa}:=(\kappa_{1},\ldots,\kappa_{M})\in\ZZ_{\geq0}^{M}$
with $|\underline{\kappa}|:=\sum\kappa_{j}\leq n$, we compute
\[
B^{\underline{\kappa}}(\underline{t},\underline{\tau}):=\left.\tfrac{1}{(2\pi{\bf i})^{|\underline{\kappa}|}}\left(\partial_{J_{1}}^{\kappa_{1}}\cdots\partial_{J_{M}}^{\kappa_{M}}\mathcal{B}_{\Diamond}\right)\right|_{\underline{J}=\underline{0}}=\sum_{\underline{\kappa}'+\underline{\kappa}''=\underline{\kappa}}(2\pi{\bf i})^{-|\underline{\kappa}''|}\underline{\tau}^{\underline{\kappa}'}\sum_{\underline{n}\in\ZZ_{\geq0}^{M}}b_{\underline{n}}^{\underline{\kappa}''}\underline{t}^{\underline{n}}
\]
where $b_{\underline{n}}^{\underline{\kappa}''}=\left(\partial_{J_{1}}^{\kappa_{1}''}\cdots\partial_{J_{M}}^{\kappa_{M}''}\mathbb{B}_{\underline{n}}\right)\left(\underline{0}\right).$
Given bases $\{\psi_{\ell}^{r}\}\subset H_{2r}(\PP_{\Diamond^{\circ}},\ZZ)$
resp. $\{\hat{\psi}_{\ell}^{r}\}\subset\text{im}(\imath_{*}^{2r})_{\ZZ}$ for each $r$,
we obtain $\CC$-bases for the solutions to $\tau_{\text{GKZ}}^{\Diamond}$
resp. for the $\CC$-periods (assuming the Hyperplane Conjecture)
which are $\ZZ$-linear combinations of the $\{B^{\underline{\kappa}}\}$.
That is, writing $$B^{\underline{0}}(\underline{t})=\sum b_{\underline{n}}^{\underline{0}}\underline{t}^{\underline{n}}=:\sum a_{\underline{n}}\underline{t}^{\underline{n}}=:A(\underline{t})$$ for the period holomorphic in $\underline{t}$,
we have\begin{equation} \label{3-2}
\overset{(\wedge)}{\psi}{}_{\ell}^r (\mathcal{B}_{\Diamond}) = A(\underline{t})\overset{(\wedge)}{P}{}_{\ell}^r (\underline{\tau}) + \sum_{|\underline{\kappa}'|<r} (2\pi{\bf i})^{|\underline{\kappa}'|-r} \underline{\tau}^{\underline{\kappa}'} \sum_{\underline{n}} \left( \sum_{|\underline{\kappa}''|=r-|\underline{\kappa}'|} \overset{(\wedge)}{c}{}_{\underline{\kappa}',\underline{\kappa}''} b_{\underline{n}}^{\underline{\kappa}''} \right) \underline{t}^{\underline{n}}
\end{equation}where $\overset{(\wedge)}{P}{}_{\ell}^{r}$ are $\ZZ$-homogeneous
polynomials of degree $r$ and $\overset{(\wedge)}{c}{}_{\underline{\kappa}',\underline{\kappa}''}\in\ZZ.$
\begin{rem}
(i) The full assertion for the $\CC$-periods holds without the Hyperplane
Conjecture for $r\leq\min\left\{ \tfrac{n-1}{2},1\right\} .$ Writing
$\mathcal{A}'\subset\mathcal{A}$ for the points (if any) interior
to facets, the periods cannot depend on the $\{\log(\mathtt{a}_{i})\}_{\underline{v}^{(i)}\in\mathcal{A}'}$
because taking $\mathtt{a}_{i}\to0$ does not make $X_{\underline{\mathtt{a}}}$ singular.
Moreover, $X^{\circ}$ avoids the corresponding (exceptional) $\{D_{i}\}$;
so there are $M-|\mathcal{A}'|$ independent $\{\hat{\psi}_{\ell}^{1}\}$
(with leading terms $A(\underline{t})\,\times$ the $M-|\mathcal{A}'|$
independent linear combinations of the $\{\tau_{j}\}$ with no such
$\log(\mathtt{a}_{i})$'s). But these must all be periods, since by mirror
symmetry (see \cite{AGM} and \cite[Thm. 6.9]{Ir}) there are $h_{\text{van}}^{n-2,1}(X_{\underline{\mathtt{a}}})=h_{\text{tor}}^{1,1}(X^{\circ})=M-|\mathcal{A}'|$
independent periods of degree $1$ in $\underline{\tau}$.

(ii) Denote the (unipotent) local monodromies in $t_i$ by $T_i$ and their logarithms by $N_i$.  Applying $r$ of these $\{N_i\}$ to a $\ZZ$-period
with ``$\log^{r}(\underline{t})$'' leading term must yield a $\ZZ$-multiple
of $A(\underline{t})$. So (a fixed integer multiple of) this period
must be a $\ZZ$-linear combination of the $\{\psi_{\ell}^{r}(\mathcal{B}_{\Diamond})\}$
plus a $\CC$-linear combination of the $\{\psi_{\ell}^{r'}(\mathcal{B}_{\Diamond})\}_{r'<r}$.
If all of the $\CC$-linear combinations that can appear are themselves
$\CC$-periods, they can be subtracted off. So by (i), one in fact has a basis
of $\CC$-periods of the form \eqref{3-2} for $r\leq\min\{\tfrac{n+1}{2},2\}.$
\end{rem}
Call $\underline{\ell}(\underline{n}):=\sum n_{k}\underline{\ell}^{(k)}$
\emph{effective} (resp. \emph{quasi-effective}) if all $\ell_{i}(\underline{n})$
($i>0$) are $\geq0$ (resp. at most one $<0$). Clearly the 
\[
a_{\underline{n}}=\left\{ \begin{array}{cc}
\frac{(-\ell_{0}(\underline{n}))!}{\prod_{i>0}\ell_{i}(\underline{n})!} & \underline{\ell}(\underline{n})\text{ effective}\\
0 & \text{otherwise,}
\end{array}\right.
\]
being multinomial coefficients, are all integers. Now using \eqref{3-1}:
\begin{itemize}[leftmargin={*}]
\item If $\ul(\un)$ is effective, then the $ $$\left.\left(\d_{D_{0}}^{r_{0}}\cdots\d_{D_{N}}^{r_{N}}\mathcal{B}_{\ul(\un)}\right)(\underline{0})\right/a_{\un}$
are $\ZZ$-linear combinations of products of the form $\prod_{i=0}^N \prod_{j=1}^{|\ell_i (\underline{n})|} j^{-p^{(i)}_j}$, where $\underline{p}^{(i)} \in \ZZ_{\geq 0}^{|\ell_i(\underline{n})|}$ and $|\underline{p}^{(i)}|=r_i$.
Since $b_{\un}^{\uk''}/a_{\un}$ is a $\ZZ$-linear combination of
these with $|\underline{r}|=|\uk''|$, $b_{\un}^{\uk''}$ can be written
as $\tfrac{P}{Q}$ ($P,Q\in\ZZ$) with $Q\mid (L_{|\ell_{0}(\un)|})^{|\uk''|}$,
where $$L_{s}:=\text{lcm}\{1,\ldots,s\} .$$
\item If $\ul(\un)$ is quasi-effective, with (say) $\ell_{1}(\un)<0$,
then $\left(\d_{D_{1}}\mathcal{B}_{\ul(\un)}\right)(\uo)=$
\[
\frac{(-\ell_{0}(\un))!\,(-\ell_{1}(\un)-1)!}{\prod_{i>1}\ell_{i}(\un)!}=\left.\frac{(\Sigma_{i>1}\ell_{i}(\un))!}{\Pi_{i>1}\ell_{i}(\un)!}\right/\frac{(\Sigma_{i>1}\ell_{i}(\un))!}{(-\ell_{0}(\un))!\,(-\ell_{1}(\un)-1)!}
\]
is the quotient of a multinomial coefficient by an integer of the
form $A!/(B-1)!(A-B)!$, which always divides $L_{A}$ \cite[Thm. 3(i)]{Na}. Repeated differentiation
as above now shows that $b_{\un}^{\uk''}=P/Q$ with $Q\mid (L_{|\ul(\un)|^{+}})^{|\uk''|}$,
where $|\ul|^{+} :=\sum_{i:\,\ell_{i}>0}\ell_{i}\;(\geq-\ell_{0}).$\end{itemize}
\begin{defn}\label{def1}
A reflexive polytope $\Diamond\subset\RR^{n}$ is \emph{facile} if:\\
$\bullet$ $\Diamond,\Diamond^{\circ}$ admit regular projective triangulations;\\
$\bullet$ the Mori cone $\mathcal{M}_{\Diamond^{\circ}}$ is regular
simplicial, with generators $\ul^{(k)}$;\\
$\bullet$ $\ul(\un):=\sum n_{k}\ul^{(k)}$ is quasi-effective for
each $\un\in\ZZ_{\geq0}^{M}$; and\\
$\bullet$ $n\leq3$, or the Hyperplane Conjecture holds for $\Diamond$.
\end{defn}
\begin{example}
The convex hulls of $\{(1,0),\, (0,1),\, (1,1),\, (-1,-1)\}$ in $\RR^2$ and $\{(\pm1,0,0),\, (0,\pm1,0),\, (0,0,\pm1)\}$ in $\RR^3$ are facile. These polytopes underlie the ``facile 2-coverings'' (terminology defined at the beginning of $\S$\ref{S3}) in $\S\S$\ref{SS5.2}-\ref{SS5.3}, which compensate for the fact that the two Ap\'ery polytopes are not facile.
\end{example}
Now let $W_{\bullet}$ denote the monodromy weight filtration for
the large complex structure limit (i.e. for the sum of the monodromy logarithms), with 
\[
h_{i}:=\text{rk}\left(\text{Gr}_{2i}^{W}H^{n-1}(X_{\underline{\mathtt{a}}})\right)=\text{rk}\left(\text{Gr}_{-2i}^{W}H_{n-1}(X_{\underline{\mathtt{a}}})\right)=\text{rk}\left(H^{i,n-i-1}(X_{\underline{\mathtt{a}}})\right)
\]
($\underline{\mathtt{a}}$ very general), which is $1$ for $i=0$ or $n-1$.  We have shown:
\begin{thm}\label{th1} 
If $\Diamond$ is facile, there exists a basis of $H_{n-1}(X_{\underline{\mathtt{a}}},\CC)$,
of the form $\gamma_{\mu}^{r}\in W_{2(r-n+1)}H_{n-1}(X_{\underline{\mathtt{a}}},\CC)$
$(r=0,\ldots,n-1;\,\mu=1,\ldots,h_{r})$, with periods \begin{equation} \label{5-1}
\pi^r_{\mu} := \int_{\gamma^r_{\mu}} \omega_{\underline{\mathtt{a}}} = \sum_{|\uk|=r} c_{\mu,\uk}^r \utt^{\uk} \sum a_{\un} \ut^{\un} + \sum_{|\uk|<r} \tfrac{1}{(2\pi\mathbf{i})^{r-|\uk|}}\utt^{\uk}\sum_{\un\in\ZZ^M_{\geq 0}} \beta^{r,\uk}_{\mu,\un} \ut^{\un} ,
\end{equation}where each $a_{\un}\in\ZZ$, each $c_{\mu,\uk}^{r}\in\ZZ$, each $\beta_{\mu,\un}^{r,\uk}$ is of the form $\frac{P}{Q}$
$(P,Q\in\ZZ)$ with $Q\mid (L_{|\ul(\un)|^{+}})^{r-|\uk|}$, and $($for
each $r,\mu)$ some $c_{\mu,\uk}^{r}\neq0$. Moreover, this basis
becomes $\QQ$-rational in the associated graded $\oplus\text{Gr}_{2i}^{W}$.
\end{thm}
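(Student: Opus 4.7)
The plan is to construct $\{\gamma_{\mu}^{r}\}$ by Poincar\'e-dualizing a basis of periods produced from the Hosono--Lian--Yau series $\mathcal{B}_{\Delta}$, and then to read off the integrality properties from the explicit bounds on the coefficients $b_{\un}^{\uk''}$ recorded in the two bullets preceding Definition \ref{def1}.

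First, since $\Delta$ is facile, either the Hyperplane Conjecture applies directly (when $n\geq 4$) or its conclusion holds in the relevant weight range by the preceding Remark (when $n\leq 3$). Fix $\ZZ$-bases $\{\hat{\psi}_{\ell}^{r}\}\subset\text{im}(\imath_{*}^{2r})_{\ZZ}$ for $r=0,\ldots,n-1$; each $\hat{\psi}_{\ell}^{r}(\mathcal{B}_{\Delta})$ is then a $\CC$-period, hence equal to $\int_{\hat{\gamma}_{\ell}^{r}}\omega_{\underline{a}}$ for a unique dual cycle $\hat{\gamma}_{\ell}^{r}\in H_{n-1}(X_{\underline{a}},\CC)$. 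The formula \eqref{3-2} shows this period has leading part $A(\ut)\hat{P}_{\ell}^{r}(\utt)$ with $\hat{P}_{\ell}^{r}$ an integer-coefficient polynomial of degree exactly $r$; since the $\log$-order of a period controls the step of the monodromy weight filtration of its dual cycle, $\hat{\gamma}_{\ell}^{r}\in W_{2(r-n+1)}H_{n-1}(X_{\underline{a}},\CC)$. The count matches: by mirror symmetry the number of $\ell$'s at level $r$ equals $h_{r}$. After relabeling $\ell\leftrightarrow\mu$, recasting \eqref{3-2} in the form of \eqref{5-1} is a notational rearrangement, with $c_{\mu,\uk}^{r}$ the (integer) coefficient of $\utt^{\uk}$ in $\hat{P}_{\ell}^{r}$ and $\beta_{\mu,\un}^{r,\uk}=\sum_{|\uk''|=r-|\uk|}\hat{c}_{\uk,\uk''}\,b_{\un}^{\uk''}$. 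Since $\hat{P}_{\ell}^{r}\not\equiv 0$, some $c_{\mu,\uk}^{r}\neq 0$ for each $(r,\mu)$.

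Second, the two bullets preceding Definition \ref{def1} give exactly the required integrality: for effective $\ul(\un)$, $a_{\un}$ is an integer multinomial coefficient (and vanishes otherwise), while for quasi-effective $\ul(\un)$ each $b_{\un}^{\uk''}=P/Q$ with $Q\mid L_{|\ul(\un)|^{+}}^{|\uk''|}$. Facility of $\Delta$ ensures every $\ul(\un)$ arising from $\un\in\ZZ_{\geq 0}^{M}$ is quasi-effective, so this bound applies universally; integer linear combinations preserve it, giving $Q\mid L_{|\ul(\un)|^{+}}^{r-|\uk|}$ for $\beta_{\mu,\un}^{r,\uk}$. Finally, the $\QQ$-rationality of $\{\gamma_{\mu}^{r}\}$ on $\oplus\text{Gr}_{2i}^{W}H_{n-1}$ follows because the class in $\text{Gr}_{2(r-n+1)}^{W}H_{n-1}$ is detected by the leading term $A(\ut)\hat{P}_{\ell}^{r}(\utt)$, and the mirror-symmetric identification of $\text{Gr}_{2(r-n+1)}^{W}H_{n-1}(X_{\underline{a}},\QQ)$ with the $\QQ$-dual of $\text{im}(\imath_{*}^{2r})_{\QQ}$ sends this class back to the integral functional $\hat{\psi}_{\ell}^{r}$.

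The principal obstacle is the weight-filtration assignment in the first paragraph: one must verify that a period of exact $\log$-order $r$ corresponds to a homology class lying in $W_{2(r-n+1)}\setminus W_{2(r-n)}$, and that such classes (over all $r'\leq r$) span the entire filtered step. This rests on the nondegeneracy of the period pairing on each associated graded piece together with the strict compatibility of the monodromy weight filtration with the Hodge filtration at the MUM point -- a standard but nontrivial consequence of admissibility of the underlying variation of Hodge structure.
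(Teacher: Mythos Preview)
Your proposal is correct and follows essentially the same approach as the paper: the theorem is stated there as a direct consequence of the preceding material (formula \eqref{3-2}, the Remark covering $n\leq 3$, and the two bullets bounding the denominators of $b_{\un}^{\uk''}$), and you have simply made that deduction explicit, including the identification of the weight-filtration step with the $\log$-order of the period.
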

For the irrationality proofs, we need to apply this to certain 1-parameter
families.  We revise the notation for Laurent polynomials from $F_{\underline{\mathtt{a}}}$ to $\Phi$ to emphasize the specialization.
\begin{defn} 
\label{def2} A \emph{facile CY pencil} is a family of anticanonical hypersurfaces
$\mathrm{X}_{\xi}=\overline{\{\Phi(\xi)=0\}}\subset\PP_{\Diamond}$
parametrized by $\xi\in\PP^{1}$, where:\\
$\bullet$ $\Diamond\subset\RR^{n}$ is a facile polytope;\\
$\bullet$ $\Phi(\xi,\ux):=\sum_{\um\in\mathcal{A}_{\Diamond}}\xi^{o_{\um}}P_{\um}(\xi)\ux^{\um},$
with $o_{0}=0$ and $o_{\um}>0$ for $\um\neq\uo$, $P_{\um}(\xi)\in\ZZ[\xi]$
with $\gcd_{\um\in\mathcal{A}}\{P_{\um}(\xi)\}=1$, and (if $n>1$)
$\prod_{i=0}^{N}P_{\uv^{(i)}}(0)^{\ell_{i}^{(k)}}=1$ $(\forall k)$,
$P_{\uo}(0)=1$;\\
$\bullet$ the VHS on $H^{n-1}(\mathrm{X}_{\xi})$ is pure, with a
factor\footnote{``tr'' stands for ``transcendental'' (motivated by the $n=3$ case)} $H_{\text{tr}}^{n-1}(\mathrm{X}_{\xi})=:\mathcal{W}_{\Phi}$
with Hodge numbers $h^{n-1,0}=h^{n-2,1}=\cdots=h^{0,n-1}=1$; and\\
$\bullet$ $\sum_{i=1}^{N}o_{\uv^{(i)}}\ell_{i}(\un)\geq|\ul(\un)|^{+}$
for all $\un\in\ZZ_{\geq0}^{M}$ (automatic if $\ul(\un)$ effective).
\end{defn}
\noindent In particular, note that $\mathcal{W}_{\Phi}$ has maximal unipotent
monodromy at $\xi=0$. Replacing the $\{\mathtt{a}_i\}$ in $t_k=\underline{\mathtt{a}}^{\underline{\ell}^{(k)}}$ by the corresponding coefficients of $\Phi$, we get $t_{k}=\xi^{\sum o_{i}\ell_{i}^{(k)}}g_{k}(\xi)$
(with $g_{k}(\xi)\in1+\xi\ZZ[[\xi]]$) and $2\pi\mathbf{i}\tau_{k}=\log(t_k)=(\sum o_{i}\ell_{i}^{(k)})\log\xi+\sum_{m>0}\frac{h_{km}}{m}\xi^{m}$
(with $h_{km}\in\ZZ$). Substituting into $\eqref{5-1}$ and normalizing yields at once the
\begin{cor} \label{cor1}
Near $\xi=0$, any facile CY pencil admits a multivalued basis $\{\gamma_{j}\}_{j=0}^{n-1}$
of $\mathcal{W}_{\phi,\CC}^{\vee}\cong H_{n-1}^{\text{tr}}(\mathrm{X}_{\xi},\CC)$
$(\QQ$-rational in $\text{Gr}_{\bullet}^{W})$, holomorphic functions
$f^{(j)}(\xi)=\sum_{m\geq0}\mathfrak{f}_{m}^{(j)}\xi^{m}$, and an
integer $\varepsilon\in\mathbb{N}$, such that the periods
\[
\Pi_{\ell}(\xi):=\int_{\gamma_{\ell}}\varpi_{\xi}:=\int_{\gamma_{r}}\tfrac{1}{(2\pi\mathbf{i})^{n-1}}\mathrm{Res}_{\mathrm{X}_{\xi}}\left(\tfrac{dx_{1}/x_{1}\wedge\cdots\wedge dx_{n}/x_{n}}{\Phi(\xi,\ux)}\right)
\]
take the form 
\[
\Pi_{\ell}(\xi)=\frac{1}{(2\pi\mathbf{i})^{\ell}}\sum_{j=0}^{\ell}\frac{1}{j!}\log^{j}(\xi)f^{(\ell-j)}(\xi),
\]
with each $\varepsilon L_{m}^{j}\mathfrak{f}_{m}^{(j)}\in\ZZ$, and
$\mathfrak{f}_{0}^{(0)}=1$. 
\end{cor}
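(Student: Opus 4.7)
The plan is to specialize Theorem \ref{th1} to the pencil, substituting the formulas for $t_k(\xi)$ and $\tau_k(\xi)$ recorded in the paragraph preceding the statement, and then tracking denominators carefully. First, since $\W_\phi$ has Hodge numbers all equal to $1$, each graded piece $\mathrm{Gr}^W_{-2(n-1-r)}H^{\mathrm{tr}}_{n-1}$ has rank $1$; from the basis $\{\gamma_\mu^r\}$ of Theorem \ref{th1} I select, for each $r=0,\ldots,n-1$, a $\QQ$-combination $\gamma_r$ whose class in $\mathrm{Gr}^W$ is a $\QQ$-generator, using the $\QQ$-rationality on the graded guaranteed by Theorem \ref{th1}.

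Next I substitute $t_k=\xi^{\nu_k}g_k(\xi)$ with $\nu_k:=\sum_{i=1}^N o_{\uv^{(i)}}\ell_i^{(k)}$ and $g_k\in 1+\xi\ZZ[[\xi]]$, yielding $2\pi\mathbf{i}\tau_k=\nu_k\log\xi+h_k(\xi)$ where $h_k:=\log g_k\in\xi\QQ[[\xi]]$; since $g_k'/g_k\in\ZZ[[\xi]]$ and integration picks up one factor $1/m$, we have $m\cdot[\xi^m]h_k\in\ZZ$. Expanding $(2\pi\mathbf{i})^{|\uk|}\utt^{\uk}=\prod_k(\nu_k\log\xi+h_k)^{\kappa_k}$ by the binomial theorem and collecting by powers of $\log\xi$, the coefficient of $(\log\xi)^j$ is an integer combination of $(|\uk|-j)$-fold products of $h_k$'s and $\nu_k$'s, so its $\xi^m$-coefficient has denominator dividing $L_m^{|\uk|-j}$. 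Meanwhile $\ut^{\un}=\xi^{\sum_i o_i\ell_i(\un)}G_{\un}(\xi)$ with $G_{\un}\in 1+\xi\ZZ[[\xi]]$, and the passage from $\omega_{\underline{a}}$ to $\omega_\xi$ multiplies by $P_{\uo}(\xi)^{-1}\in 1+\xi\ZZ[[\xi]]$.

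Substituting into \eqref{5-1}, the $(\log\xi)^j\cdot\xi^m$-coefficient of $(2\pi\mathbf{i})^r\pi_\mu^r$ has denominator bounded by $L_m^{|\uk|-j}\cdot L_{|\ul(\un)|^+}^{r-|\uk|}$. The decisive input is the last bullet of Definition \ref{def2}: any $\un$ contributing to $\xi^m$ satisfies $m\geq\sum_i o_{\uv^{(i)}}\ell_i(\un)\geq|\ul(\un)|^+$, so $L_{|\ul(\un)|^+}\mid L_m$ and the bound collapses to $L_m^{r-j}$. Setting $r=\ell$ and relabeling $j\mapsto\ell-j$ matches the required bound $L_m^j$ on $\mathfrak{f}_m^{(j)}$.

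To close, I normalize $\gamma_0$ so that $\mathfrak{f}_0^{(0)}=1$: Theorem \ref{th1} gives $\pi_1^0=c_{1,\emptyset}^0\,A(\ut)$ with $c_{1,\emptyset}^0\in\ZZ\setminus\{0\}$, and $A(0)=a_{\uo}=1$, so dividing $\gamma_1^0$ by $c_{1,\emptyset}^0$ suffices. All uniform denominators accumulated along the way---$c_{1,\emptyset}^0$, the constants rationalizing $\gamma_r$ in $\mathrm{Gr}^W$, and the factorials $j!\leq(n-1)!$ that arise from matching $\frac{1}{j!}\log^j\xi$ on each side---are absorbed into a single $\varepsilon\in\NN$. \emph{The main obstacle} is precisely this denominator bookkeeping: confirming that the $(|\uk|-j)$ logarithmic-derivative factors (each $L_m$-bounded) combined with the $\beta$-denominator $L_{|\ul(\un)|^+}^{r-|\uk|}$ from Theorem \ref{th1} telescope to exactly $L_m^{r-j}$. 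This clean collapse is engineered by the inequality $\sum_i o_{\uv^{(i)}}\ell_i(\un)\geq|\ul(\un)|^+$ in Definition \ref{def2}; once granted, the verification is mechanical.
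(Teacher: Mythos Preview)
Your proposal is correct and follows essentially the same approach as the paper: the paper's argument is the single sentence preceding the corollary (``substituting into \eqref{5-1} and normalizing yields at once the [corollary]''), and you have faithfully expanded that sentence, correctly identifying the substitutions $t_k=\xi^{\nu_k}g_k(\xi)$, $2\pi\mathbf{i}\tau_k=\nu_k\log\xi+h_k(\xi)$, and $P_{\uo}(\xi)^{-1}\in 1+\xi\ZZ[[\xi]]$, and correctly pinpointing the inequality $\sum_i o_{\uv^{(i)}}\ell_i(\un)\geq|\ul(\un)|^+$ from Definition~\ref{def2} as the mechanism that converts the $L_{|\ul(\un)|^+}$-bound of Theorem~\ref{th1} into the uniform $L_m$-bound. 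One small point you leave implicit (as does the paper) is that the basis $\{\gamma_\ell\}$ should be chosen with $N\gamma_\ell=\gamma_{\ell-1}$ so that the \emph{same} functions $f^{(j)}$ appear in every $\Pi_\ell$; this is part of the ``normalizing'' and is straightforward once the Hodge numbers are all $1$, with any resulting rational constants absorbed into $\varepsilon$.
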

Note that in this scenario, $\gamma_{j}$ generates $\mathrm{Gr}_{2(j-n+1)}^{W}$,
the monodromy logarithm $N$ sends $\gamma_{n-1}\mapsto\gamma_{n-2}\mapsto\cdots\mapsto\gamma_{1}\mapsto\gamma_{0}$,
and $\gamma_{0}$ generates the local invariant cycles in $H_{n-1}^{\text{tr}}(\mathrm{X}_{\xi},\ZZ)$.

\section{Very special values}\label{S3}

Let $\phi\in\ZZ[x_{1}^{\pm1},\ldots,x_{n}^{\pm1}]$ ($n\geq1$) be
an integral Laurent polynomial, with reflexive Newton polytope $\Delta=\Delta_{\phi},$
and $\PP_{\Delta}$ be the (possibly singular) toric variety associated
to a maximal projective triangulation of $\Delta^{\circ}$ \cite{Ba}.
We begin by defining several notions we shall require for the general
irrationality statement.

Denote by $\X_{\phi}\overset{\rho}{\to}\PP^{1}$ the Zariski closure
in $\PP_{\Delta}\times\PP_{t}^{1}$ of $\mathcal{X}_{\phi}^{\times}:=\{1-t\phi(\ux)=0\}\subset\GG_{m}^{n}\times(\PP_{t}^{1}\backslash\{0\})$,
and write $t=\lambda^{-1}$, $X_{\phi,t}:=X_{\phi}^{\lambda}:=\rho^{-1}(t)$.
(Note that $\rho$ is given by $1/\phi$ resp. $\phi$ when working
in $t$ resp. $\lambda$.) We shall call $\phi$ \emph{involutive}
if there exists a birational map $\mathcal{I}:\X_{\phi}\dashrightarrow\X_{\phi}$
over $t\mapsto\pm t^{-1}$, defined over $\QQ$. Further, $\phi$
is said to admit a \emph{facile $r$-cover} ($r\in\NN$) if there
is a facile CY pencil $\mathrm{X}=\{\Phi(\xi,\uy)=0\}\subset\PP_{\Diamond}\times\PP_{\xi}^{1}$
and a dominant (generically $r:1$) rational map $\mathcal{J}:\mathrm{X}\dashrightarrow\X_{\phi}$
over $\xi\mapsto\xi^{r}=t$. We say that $\phi$ is of \emph{conifold
type} if%
\footnote{When $\phi$ is not $\Delta$-regular, we also assume that no \emph{non-generic}
singularities of $X_{\phi}^{\lambda}$ along the base locus $X_{\phi}^{\lambda}\cap\DD_{\Delta}$
occur for values $\lambda\in\D_{\phi}^{\times}$ (hence don't affect the
local system $\VV_{\phi}$), see below.%
} the nonzero critical values of $\phi:(\CC^{\times})^{n}\to\CC$ underlie
(isolated) ordinary double points in the fibers.

While we will not assume that $1-t\phi$ is $\Delta$-regular for
general $t$, or even that the generic fiber of $\rho$ is smooth,
we shall impose the conditions that the variation $\H_{\phi}$ underlain
by $R^{n-1}\rho_{*}\QQ$ is pure (of weight $(n-1)$),%
\footnote{For $n=1$, the generic $X_{\phi,t}$ is a pair of points, and $H^{0}$ {[}resp.
$H_{0}${]} means everywhere the augmentation cokernel {[}resp. kernel{]},
i.e. reduced (co)homology. Involutivity and reflexivity imply that
$x\phi(x)$ is a quadratic polynomial with two distinct roots.%
} and that the minimal sub-VHS $\V_{\phi}$ containing the class of
\[
\omega_{\phi,t}:=\omega_{\phi}^{\lambda}:=\tfrac{1}{(2\pi\mathbf{i})^{n-1}}\text{Res}_{X_{\phi}^{\lambda}}\left(\tfrac{dx_{1}/x_{1}\wedge\cdots\wedge dx_{n}/x_{n}}{1-t\phi(\ux)}\right)\in\Omega^{n-1}(X_{\phi}^{\lambda})
\]
is of rank $n$, with Hodge numbers $\{h^{i,n-i-1}\}_{0\leq i\leq n-1}$
all $1$. In this case we call $\phi$ \emph{principal} (since $\V_{\phi}$
is a ``principal VHS'' \cite{Ro}). Note that $\V_{\phi}$ is polarized by taking cup products on a (fiberwise) desingularization; write $Q$ for this polarization.

Our $\V_{\phi}$ is thus a polarized VHS on $\PP^1\setminus \D_{\phi}$, where
$\D_{\phi}$ denotes its singularity (discriminant) locus.  In particular, since $X_{\phi,0}=\DD_{\Delta}$,
$\V_{\phi}$ has maximal unipotent monodromy at $t=0$, and so $0\in\D_{\phi}$.
We write $\D_{\phi}^{\times}:=\D_{\phi}\cap\GG_{m}$, $P_{\phi}(t):=\prod_{t_0 \in\D_{\phi}^{\times}}\left(\tfrac{t}{t_0}-1\right)$,
$\delta_{\phi}:=|\D_{\phi}^{\times}|=\deg(P_{\phi})$, and $$\mathfrak{r}_{\phi}=\min\{|t_0|\mid t_0\in\D_{\phi}^{\times}\} .$$
For $\phi$ of conifold type, the monodromy of $\V_{\phi}$ about
each point of $\D_{\phi}^{\times}$ is given by a single Picard-Lefschetz
transformation. Since the local exponents are then given by $\{0,1,\ldots,n-2\}\cup\{\tfrac{n-2}{2}\}$,
an easy calculation%
\footnote{With no information on exponents, one would have $P_{\phi}(t)\delta_{t}^{n}+\sum_{k=0}^{n-1}\frac{F_{k}(t)}{P_{\phi}(t)^{n-k-1}}\delta_{t}^{k}$
($F_{k}\in\CC[t]$); the existence of holomorphic solutions with orders
$0$ thru $n-2$ about each root $t_0$ of $P_{\phi}$ forces
$(t-t_0)^{n-k-1}\mid F_{k}$. Note that if $n$ is even,
the exponent $\frac{n-2}{2}$ is repeated.%
} shows that the Picard-Fuchs operator for $\omega_{\phi}$ takes the
form
\[
D_{\phi}(t)=P_{\phi}(t)\delta_{t}^{n}+\sum_{k=0}^{n-1}f_{\phi,k}(t)\delta_{t}^{k}\in\CC[t,\delta_{t}].
\]
For $n>2$ we remark that $\rho$ is not semistable over $0$ (and
possibly at other points of $\D_{\phi}$) without blowing up $\PP_{\Delta}$
along the singularities of the base locus, but this won't be an issue
for us.

The natural isomorphism $\X_{\phi}^{\times}\to\GG_{m}^{n}$ provides $n$ coordinates
$x_{i}\in\mathcal{O}^{*}(\X_{\phi}^{\times})\cong H_{\mathcal{M}}^{1}(\X_{\phi}^{\times},\QQ(1))$
whose cup product produces the \emph{coordinate symbol} $$\{\ux\}=\{x_{1},\ldots,x_{n}\}\in H_{\M}^{n}(\X_{\phi}^{\times},\QQ(n)).$$
We shall call $\phi$ \emph{tempered} if this lifts to a motivic cohomology
class $$\Xi\in H_{\M}^{n}(\X_{\phi}^{-},\QQ(n)),$$ where $\X_{\phi}^{-}:=\X_{\phi}\setminus X_{\phi}^{\infty}$.
(See Theorem \ref{th3}(a) for a down-to-earth temperedness criterion when $n\leq 3$.) Writing $T_{x_{i}}$ {[}resp. $T_{\{\ux\}}${]} for the analytic chain
$x_{i}^{-1}(\RR_{-})$ {[}resp. $\cap_{i=1}^{n}x_{i}^{-1}(\RR_{-})${]}
and $U_{\phi}:=\PP^{1}\setminus\rho(T_{\{\ux\}})$, we term $\phi$
\emph{strongly tempered} if the \emph{higher normal function (HNF)} 
defined by 
\[
\nu_{\phi,t}:=\mathrm{AJ}_{X_{\phi,t}}^{n,n}(\Xi|_{X_{\phi,t}})\in H^{n-1}(X_{\phi,t},\CC/\QQ(n))
\]
has a single-valued holomorphic family of lifts
\[
\tilde{\nu}_{\phi,t}\in H^{n-1}(X_{\phi,t},\CC)
\]
over $U_{\phi}$.%
\footnote{Note that if we assume this only over $U_{\phi}\setminus U_{\phi}\cap\D_{\phi}$,
the lift extends to $U_{\phi}$ anyway: the single-valuedness of $\tilde{\nu}$
on a punctured disk about $t_{0}$ means $\nu$ has no singularity
at the center, so that $\tilde{\nu}$ uniquely extends to the whole
disk. The value $\tilde{\nu}(t_{0})$ lies in $\ker(T-I)$ in the
canonical extension $\H_{\phi}^{\text{lim}}$, which contains the
image of $H^{n-1}(X_{\phi,t_{0}})$; see \cite[$\S$5]{7K}.%
} Here by a ``lift'', we simply mean that the natural map $H^{n-1}(X_{\phi,t},\CC)\twoheadrightarrow H^{n-1}(X_{\phi,t},\CC/\QQ(n))$ maps $\tilde{\nu}_{\phi,t} \mapsto \nu_{\phi,t}$.

\begin{rem}
As defined, $\nu_{\phi}$ gives an element of the space $\mathrm{ANF}(\H_{\phi}(n))$
of admissible normal functions \cite[$\S$5.2]{7K}.
We shall only really care about the $\V_{\phi}$-component of $\nu_{\phi}$
below, so one might as well regard it (by projection) as an element of $\mathrm{ANF}(\V_{\phi}(n))$.
Note that any lift (such as $\tilde{\nu}_{\phi}$) of this component
must have nontrivial monodromy on $\PP^{1}\setminus\D_{\phi}$ (as
opposed to just $U_{\phi}$): otherwise its topological invariant
$[\nu_{\phi}]\in\mathrm{Hom}\left(\QQ(0),H^{n}\left(\rho^{-1}(\PP^{1}\setminus\D_{\phi}),\QQ(n)\right)\right)$
would vanish. Since the latter is computed by $\Omega_{\Xi}$ (see the proof of \cite[Cor. 4.6]{DK}), which
restricts to the Haar form on $\X_{\times}^{*}\cong\GG_{m}^{n}$, this
is absurd.
\end{rem}

Now either $\omega_{\phi,t}$ or $\tilde{\omega}_{\phi,t}:=t\omega_{\phi,t}$
is a holomorphic section of the canonically extended Hodge bundle
$\F_{e}:=\F^{n-1}\V_{\phi,e}$. As residue forms, they are really
most naturally regarded as elements of $H_{n-1}(X_{\phi,t},\CC)$
for any $t\in\PP^{1}$. If $t\notin\D_{\phi}$, then purity of $\H_{\phi}$
makes $H^{n-1}\to H_{n-1}$ an isomorphism,%
\footnote{Here and below, we use the polarization $Q$ to make this identification
(up to twist).%
} allowing us to treat them as cohomology classes; but over all of
$\PP^{1}$ or $U_{\phi}$, they only make sense as sections of $H_{n-1}$.
Conveniently enough, this pairs with $H^{n-1}(X_{\phi,t},\CC)$, allowing
us to define 
\[
\tilde{V}_{\phi}(\lambda):=\left\langle \tilde{\nu}_{\phi}^{\lambda},\tilde{\omega}_{\phi}^{\lambda}\right\rangle \in\mathcal{O}(U_{\phi}).
\]
This extends to a multivalued-holomorphic function
\[
V_{\phi}(\lambda):=\left\langle \nu_{\phi}^{\lambda},\tilde{\omega}_{\phi}^{\lambda}\right\rangle \;\;\text{mod }\tilde{\mathscr{P}}_{\phi}^{\QQ(n)}
\]
on $\PP^{1}\setminus\D_{\phi}$, defined modulo $\QQ(n)$-periods of
$\tilde{\omega}_{\phi}^{\lambda}$; we shall refer to both $V$ and
$\tilde{V}$ as the \emph{truncated higher normal function (THNF)}
associated to $\Xi$ and $\tilde{\omega}_{\phi}$. Note that $\tilde{V}_{\phi}$
\emph{cannot} extend to an entire function, since $\tilde{\nu}_{\phi}$
has nontrivial monodromy on $\PP^{1}\setminus\D_{\phi}$ and $\tilde{\omega}_{\phi}$
has $n$ independent periods (courtesy of the maximal unipotent monodromy).

We can now state the main result of this section:
\begin{thm}
\label{th2} Let $\phi(x_{1},\ldots,x_{n})$ be an integral Laurent
polynomial, such that $\phi(-\ux)$ has all positive coefficients,
which is reflexive, involutive, principal, strongly tempered, of conifold
type, and admits a facile $r$-cover. Assume that $\delta_{\phi}=2$,
$D_{\phi}\in\ZZ[t,\delta_{t}]$, and $\mathfrak{r}_{\phi}<e^{-n}$.
Then $\tilde{V}_{\phi}(0)\notin\QQ^{\times}$.
\end{thm}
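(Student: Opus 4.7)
The plan is a Hodge-theoretic adaptation of Ap\'ery's original argument. Suppose for contradiction that $\tilde{V}_{\phi}(0)=p/q\in\QQ^{\times}$. Let $A(\lambda)=\sum a_{m}\lambda^{m}$ be the unique holomorphic period of $\tilde{\omega}_{\phi}^{\lambda}$ at the MUM point $\lambda=0$, normalized by $A(0)=1$; Corollary \ref{cor1} applied to the facile $r$-cover gives $a_{m}\in\ZZ$, and positivity of the coefficients of $\phi(-\ux)$ gives $a_{m}=[\ux^{0}]\phi(\ux)^{m}>0$ (using $[\ux^{0}]\phi(\ux)^{m}=[\ux^{0}]\phi(-\ux)^{m}$ and the positive-coefficient expansion). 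Define
\[
B(\lambda):=A(\lambda)\tilde{V}_{\phi}(0)-\tilde{V}_{\phi}(\lambda)=\sum_{m\geq1}b_{m}\lambda^{m},
\]
holomorphic in a neighborhood of $\lambda=0$ (since $\phi(\RR_{-}^{\times n})\subset\RR_{>0}$ is bounded away from $0$ by positivity). Strong temperedness plus $D_{\phi}\in\ZZ[t,\delta_{t}]$ supplies the inhomogeneous PF equation $D_{\phi}\tilde{V}_{\phi}=Y$ with $Y\in\QQ[[\lambda]]$; then $D_{\phi}B=-Y$, and the resulting recursion gives $b_{m}\in\QQ$. The proof will be complete once I show (a) $\mathrm{denom}(b_{m})$ divides a $d_{m}$ with $\log d_{m}=nm+o(m)$, and (b) $|b_{m}|=O(\rho^{-m})$ for some $\rho>e^{n}$, because then the integers $S_{m}:=q\,d_{m}\,b_{m}$ tend to $0$, so vanish for $m\gg0$, forcing $B$ to be a polynomial and contradicting nontriviality of the regulator of $\Xi$.

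\textbf{Denominator bounds.} This is the \emph{splitting of the bound} advertised in the introduction. First, Corollary \ref{cor1} combined with propagation of its $\varepsilon L_{|\ul(\un)|^{+}}^{j}$-divisibility through $D_{\phi}B=-Y$ (using $D_{\phi}\in\ZZ[t,\delta_{t}]$) yields $\varepsilon L_{rm}^{n}b_{m}\in\ZZ$. Second, involutivity supplies a complementary bound: the map $\mathcal{I}:\X_{\phi}\dashrightarrow\X_{\phi}$ over $\lambda\mapsto\pm C/\lambda$ conjugates the expansion at $\lambda=0$ with that at $\lambda=\infty$, and at the latter MUM point direct Frobenius-method computation applied to $\phi$ itself (without detouring through the facile cover) gives a period basis with the standard factorial denominators $(m!)^{n}$; pulling back by $\mathcal{I}$ yields $\varepsilon(m!)^{n}b_{m}\in\ZZ$. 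Combined via PNT, Stirling, and an elementary gcd argument, these force $\log d_{m}=nm+o(m)$.

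\textbf{Convergence and conclusion.} Conifold type with $\delta_{\phi}=2$, combined with involutivity, pins down $\D_{\phi}^{\times}=\{\mathfrak{r}_{\phi},\pm\mathfrak{r}_{\phi}^{-1}\}$. At the closer singular fiber $|\lambda|=\mathfrak{r}_{\phi}$, the vanishing sphere $\gamma\in H_{n-1}(X_{\phi}^{\lambda})$ of the ordinary double point is supported away from the real-negative locus $T_{\{\ux\}}$, so the regulator pairing $\int_{\gamma}\tilde{\nu}_{\phi}$ extends continuously across $\mathfrak{r}_{\phi}$ and $\tilde{\nu}_{\phi}$ (hence $\tilde{V}_{\phi}$) has no monodromy there. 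A local analysis at the conifold exponents $\{0,1,\ldots,n-2,(n-2)/2\}$ then shows $\tilde{V}_{\phi}$ differs near $\mathfrak{r}_{\phi}$ from a single scalar multiple of $A(\lambda)$ by a function regular at $\mathfrak{r}_{\phi}$; involutivity identifies that scalar with $\tilde{V}_{\phi}(0)$ by Frobenius-basis comparison between $\lambda=0$ and $\lambda=\infty$. Hence $B$ is regular at $\mathfrak{r}_{\phi}$, with next possible singularity at $\pm\mathfrak{r}_{\phi}^{-1}$, giving $\rho\geq\mathfrak{r}_{\phi}^{-1}>e^{n}$ by $\mathfrak{r}_{\phi}<e^{-n}$. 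Then $|S_{m}|\leq Cd_{m}\rho^{-m}\to0$ by the denominator and convergence estimates, so $b_{m}=0$ for $m\gg0$; but $B\in\QQ[\lambda]$ would make $\tilde{V}_{\phi}-\tilde{V}_{\phi}(0)A$ a polynomial, forcing the topological invariant $[\nu_{\phi}]$ (computed by $\Omega_{\Xi}$, which restricts to the Haar form on $\X_{\phi}^{\times}\cong\GG_{m}^{n}$) into the image of the periods, contradicting $[\nu_{\phi}]\neq0$.

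\textbf{Main obstacle.} The most delicate ingredient is the scalar identification above: that the singular part of $\tilde{V}_{\phi}$ at $\mathfrak{r}_{\phi}$ is proportional to $A$ with constant exactly $\tilde{V}_{\phi}(0)$. Without this \emph{transcendence matching} across the two special points $0$ and $\mathfrak{r}_{\phi}$, the combination $B$ would retain a singularity at $\mathfrak{r}_{\phi}$ and have radius only $\mathfrak{r}_{\phi}$, wasting the hypothesis $\mathfrak{r}_{\phi}<e^{-n}$. Involutivity is precisely the symmetry between the Frobenius bases at the two MUM points $\lambda=0$ and $\lambda=\infty$ that forces the two constants to coincide, which is the structural reason the various hypotheses of Theorem \ref{th2} are so closely interdependent.
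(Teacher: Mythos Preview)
Your overall scheme is Ap\'ery-like and matches the paper's architecture, but the convergence step is inverted, and this error propagates into a mis-identification of the ``main obstacle.''

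In the paper it is $\tilde{V}_{\phi}$, not $B$, that has the \emph{large} radius of convergence. Positivity of the coefficients of $\phi(-\ux)$ forces $\phi(\RR_{-}^{\times n})\subset[\mathfrak{r}_{\phi}^{-1},\infty]$, so $U_{\phi}\supset D_{\mathfrak{r}_{\phi}^{-1}}$ and the single-valued lift $\tilde{\nu}_{\phi}$ (hence $\tilde{V}_{\phi}=\sum v_{m}\lambda^{m}$) is holomorphic on that whole disk; thus $\limsup|v_{m}|^{1/m}=\mathfrak{r}_{\phi}$. Meanwhile $A(\lambda)$ has radius \emph{exactly} $\mathfrak{r}_{\phi}$: its coefficients $a_{m}=[\phi^{m}]_{\uo}$ are positive integers (so $\mathfrak{r}_{A}\le1$), and by Duistermaat--van der Kallen $\mathfrak{r}_{A}$ must be one of $\mathfrak{r}_{\phi},\mathfrak{r}_{\phi}^{-1}$. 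Hence $B=v_{0}A-\tilde{V}_{\phi}$ is genuinely \emph{singular} at $\lambda_{0}$ whenever $v_{0}\neq0$, and your claim that $B$ has radius $\rho>e^{n}$ is false. The quantity that actually decays is $v_{m}=v_{0}a_{m}-b_{m}$, and the integer squeezed into $(0,1)$ is $\varepsilon_{B}\Lambda_{m}^{n}Pa_{m}-QB_{m}$ (with $v_{0}=P/Q$, $\Lambda_{m}=\gcd(m!,L_{rm})$, $B_{m}=\varepsilon_{B}\Lambda_{m}^{n}b_{m}\in\ZZ$), whose $\limsup^{1/m}$ is $e^{n}\mathfrak{r}_{\phi}<1$. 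Your alternative endgame (``$b_{m}=0$ eventually, so $B$ is a polynomial'') would, even were it available, prove too much: regularity of both $B$ and $\tilde{V}_{\phi}$ at $\lambda_{0}$ would force $A$ regular there as well, contradicting $\mathfrak{r}_{A}=\mathfrak{r}_{\phi}$ \emph{independently} of any rationality hypothesis on $v_{0}$.

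Consequently the ``transcendence matching'' you isolate is a phantom: $\tilde{V}_{\phi}$ has no singular part at $\lambda_{0}$ to match against $A$. Involutivity in the paper serves other ends --- identifying $\mathcal{I}^{*}\omega=\pm\tilde{\omega}$ (whence the residue formula $A(\lambda)=\sum[\phi^{k}]_{\uo}\lambda^{k}$), constraining $D_{\phi}$ and the Yukawa coupling, and locating $\D_{\phi}$. A secondary issue: you have the two denominator bounds swapped. The $(m!)^{n}$ bound comes directly from the integral recursion $D_{\phi}B=\mp Q_{0}\lambda$ (no involutivity or Frobenius comparison needed), while the $L_{rm}^{n}$ bound requires the explicit formula
\[
-B_{\phi}(\lambda)=(-2\pi\ay)^{n-1}Q_{0}\sum_{\ell=0}^{n-1}(-1)^{\ell}\hat{\pi}_{n-\ell-1}(\lambda)\int_{0}\hat{\pi}_{\ell}(\lambda)\,d\lambda
\]
expressing $b_{m}$ in terms of the facile-cover period coefficients of Corollary~\ref{cor1}; ``propagating Corollary~\ref{cor1} through $D_{\phi}B=-Y$'' does not produce $L_{rm}^{n}$.
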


\subsection{Proof of Theorem \ref{th2}}\label{S3.proof}

\subsubsection*{Step 1: The power series}

By involutivity, the four points of $\D_{\phi}$ have $\lambda$-values
$0,\lambda_{0},\pm\lambda_{0}^{-1},\infty$, with $|\lambda_{0}|=\mathfrak{r}_{\phi}<1<|\lambda_{0}|^{-1}$.
(The choice of $\pm$  here matches that in the involution $t\mapsto \pm t^{-1}$.) Moreover, the $\ZZ$-local system $\VV_{\phi}$ underlying $\V_{\phi}$
has maximal unipotent monodromy at $\lambda=0$, with (rank 1) invariant
subsystem on the disk $D_{\mathfrak{r}_{\phi}}$ generated by a family
of $(n-1)$-cycles $\vf_{0}^{\lambda}$. Indeed, we may assume that
$\vf_{0}^{\lambda}=\mathcal{I}_{*}{}'\vf_{0}^{\mathcal{I}(\lambda)}$
where $\text{Tube}({}'\vf_{0})=\mathbb{T}_{n}:=\left\{ |x_{1}|=\cdots=|x_{n}|=1\right\} \subset\PP_{\Delta}\setminus X_{\phi}^{\mathcal{I}(\lambda)}$.
As the Hodge bundle $\F_{e}$ has degree 1 \cite[p. 504]{GGK}, and $\omega_{\phi}\in\Gamma(\PP^{1},\F_{e})$
has a zero at $\infty$, we have $(\mathcal{I}^{*}\omega_{\phi})/\omega_{\phi}=M\lambda^{-1}$
for some $M\in\QQ^{\times}$. But then $(\mathcal{I}^{*})^{2}\omega_{\phi}=\mathcal{I}^{*}\frac{M}{\lambda}\omega_{\phi}=\frac{M^{2}}{\lambda\mathcal{I}(\lambda)}\omega_{\phi}=\pm M^{2}\omega_{\phi},$
and since $(\mathcal{I}^{*})^{2}$ acts on $\VV_{\phi}$ (and $\VV_{\phi}^{\CC}$
is irreducible), we must have $M^{2}\in\ZZ^{\times}=\{\pm1\}$. This
forces $M=\pm1$ hence $\mathcal{I}^{*}\omega_{\phi}=\pm\tilde{\omega}_{\phi}$.
The holomorphic period at $\lambda=0$ is therefore\begin{flalign*}
A_{\phi}(\lambda ) &:= \int_{\vf_0^{\lambda}} \tilde{\omega}^{\lambda}_{\phi} = \int_{'\vf_0^{\mathcal{I}(\lambda)}} \mathcal{I}^* \tilde{\omega}^{\lambda}_{\phi}\\
&= \pm \int_{'\vf_0^{\mathcal{I}(\lambda)}} \omega_{\phi}^{\mathcal{I}(\lambda)} = \pm \int_{'\vf_{0,\pm\lambda}} \omega_{\phi,\pm\lambda} \\
&= \pm\int_{\mathbb{T}_n}\tfrac{dx_1/x_1 \wedge \cdots \wedge dx_n/x_n}{1\mp \lambda \phi(\ux)} ,
\end{flalign*}and of course we may change the signs of $\vf_{0}$ and $\lambda$
if needed so that\begin{equation}\label{13-1}
A_{\phi}(\lambda)=\sum_{k\geq 0} [\phi^k]_{\uo} \lambda^k =: \sum_{k\geq 0}a_k \lambda^k
\end{equation}(where $[\cdot]_{\uo}$ denotes ``constant term''), with $a_{0}=1$.
By the fundamental result of \cite{DvdK}, we have $\mathfrak{r}_A := \left(\limsup_{k\to \infty} |a_k|^{1/k}\right)^{-1} \in \CC^{\times}$; and since $A_{\phi}$ is a period of $(\VV_{\phi},\tilde{\omega}_{\phi})$, $\mathfrak{r}_A$ must be the modulus of an element of $\D_{\phi}^{\times}$ (i.e., $\mathfrak{r}_{\phi}$ or $\mathfrak{r}_{\phi}^{-1}$).  Since the $[\phi^k]_{\uo}$ are all (positive) integers, $\mathfrak{r}_A = \mathfrak{r}_{\phi}$.

On the other hand, positivity of the coefficients of $\phi(-\ux)$ forces $\phi(T_{\{\ux\}})\subset [1,\infty]$. Now the global minimum of $\phi(T_{\{\ux\}})$ necessarily occurs at a critical point of $\phi$; more precisely, it may be regarded as the terminus of a Lefschetz thimble on the generator $\d T_{\Xi}$ of $\V_{\phi}^{0,\text{lim}}/N\V_{\phi}^{0,\text{lim}} \cong \text{Gr}^W_0 H_{n-1}(X_{\phi}^0)$, and thus is $\pm$ the $\lambda$-coordinate of a point of $\D_{\phi}^{\times}$. Clearly this must be the larger one, so that $U_{\phi}$ contains the disk $D_{\mathfrak{r}_{\phi}^{-1}}$ about $\lambda=0$. Writing \[ \tilde{V}_{\phi}(\lambda)=:\sum_{k\geq 0} v_k \lambda^k ,\]it follows that $\mathfrak{r}_V := \left(\limsup_{k\to\infty}|v_k|^{1/k}\right)^{-1}=\mathfrak{r}_{\phi}^{-1}$.\footnote{Recall that $\tilde{V}_{\phi}$ cannot be entire.} Moreover,  since $A(\lambda)$ is (up to scale) the \emph{only} period of $\tilde{\omega}_{\phi}$ invariant about $\lambda =0$, and it is \emph{not} invariant about $\lambda = \lambda_0$, we conclude that $\tilde{V}_{\phi}(\lambda)$ [resp. $\tilde{\nu}_{\phi}^{\lambda}$] is the unique analytic continuation of $V_{\phi}(\lambda)$ [resp. holomorphic lift of $\nu_{\phi}^{\lambda}$] which is well-defined on $U_{\phi}$.

In particular, $\tilde{V}_{\phi}(0)=v_0\in \CC$ is well-defined.  Assuming henceforth that $v_0 \neq 0$ (recall we are proving that $v_0\notin \QQ^{\times}$), we may consider \[B_{\phi}(\lambda):= -\tilde{V}_{\phi}(\lambda) + \tilde{V}_{\phi}(0)A_{\phi}(0) =: \sum_{k\geq 1} b_k \lambda^k.\]Obviously this has radius of convergence $\mathfrak{r}_B=\mathfrak{r}_A \,(<1)$, while $v_k = v_0 a_k - b_k \to 0$ as $k\to 0$. Restricting if necessary to a subsequence $a_{k_j} \to \infty$, we therefore have\[\lim_{j\to \infty}\frac{b_{k_j}}{a_{k_j}} = v_0.\]

\subsubsection*{Step 2:  The inhomogeneous equation}
Since the holomorphic period of $\tilde{\omega}$ about $\lambda = 0$ is obtained by substituting $\lambda$ for $t$ in the holomorphic period of $\omega$ about $t=0$, the same goes for the (inhomogeneous) Picard-Fuchs equations: that is,\[D_{\phi}(\lambda)\tilde{\mathscr{P}}_{\phi}^{\CC} = 0.\]Consequently, in $\V_{\phi}$ we have
\begin{equation}
P_{\phi}(\lambda) \nabla_{\delta_{\lambda}}^n [\tilde{\omega}_{\phi}] = -\sum_{k=0}^{n-1} f_{\phi,k}(\lambda) \nabla^k_{\delta_k} [\tilde{\omega}_{\phi}];
\end{equation}
and regarding $\tilde{\omega}_{\phi}$ as a section of $\V_{\phi,e}$, maximal unipotent monodromy at $\lambda = 0$ and $\text{Res}_0 \nabla = \tfrac{1}{2\pi \mathbf{i}} N$ imply the independence of the $\{ \nabla_{\delta_k}^k [\tilde{\omega}_{\phi}]\}_{k=0}^{n-1}$ in the fiber $\V_{\phi,e}^0$. As $N^n=0$, $\nabla_{\delta_{\lambda}}^n [\tilde{\omega}_{\phi}]$ vanishes in $\V_{\phi,e}^0$, and so all $f_{\phi,k}(0)=0$ $\implies$ $f_{\phi,k}(\lambda)=:\lambda g_{\phi,k}(\lambda)$.  Moreover, since $D_{\phi}(\lambda)\circ \lambda^{-1}$ (like $D_{\phi}(t)$) kills $[\omega_{\phi}]$, we have \[F(t)\circ D_{\phi}(t) = D_{\phi}(\lambda)\circ \lambda^{-1} = D_{\phi}(\pm t^{-1})\circ t\]for some function $F(t)$. Since by assumption $P_{\phi}(\lambda)$ is quadratic, a short computation shows that $F(t)=\pm\frac{1}{t}$ and that the $t g_{\phi,k}(t^{-1})$ must be polynomials, forcing the $g_{\phi,k}$ to be linear.

Next, define the \emph{Yukawa coupling} by
\begin{equation}\label{15-1}
Y(\lambda) :=Q\left( [\tilde{\omega}_{\phi}^{\lambda}],\nabla_{\delta_{\lambda}}^{n-1}[\tilde{\omega}^{\lambda}_{\phi}]\right) \in \CC(\lambda)^{\times}.
\end{equation}
Let $w$ be a local coordinate at a point $p\in\D_{\phi}^{\times}$, about which monodromy is described by $\vf\mapsto \vf-\langle \vf,\sigma\rangle\sigma$ for some vanishing cycle $\sigma$ (since $\phi$ is of conifold type).  Writing $[\tilde{\omega}_{\phi}^{\lambda}]$ with respect to a local basis $\{\sigma_0,\ldots,\sigma_{n-1}\}$ of $\VV_{\phi}$ with $\sigma_1,\ldots,\sigma_{n-1}$ invariant about $p$ and $\sigma = \sigma_0$ [resp. $\sigma_1$] for $n$ odd [resp. even] (with $\sigma_0\mapsto -\sigma_0$ resp. $\sigma_0 \mapsto \sigma_0 - \sigma_1$), the coefficients $u_i (w)$ of $\{\sigma_1,\ldots,\sigma_n\}$ are holomorphic and we have (near $p$) $u_0(w)\sim w^{\frac{n-2}{2}}$ [resp. $u_0(w)\sim u_1(w)\log(w)\sim w^{\frac{n-2}{2}}\log(w)$]. From this we compute $Y\sim w^{\frac{n-2}{2}-(n-1)}\times w^{\frac{n-2}{2}} = w^{-1}$, so that $Y$ has (at worst) simple poles at $\D_{\phi}^{\times}$.  Moreover, the pairing \eqref{15-1} makes sense at $\lambda=0$ in $\V_{\phi,e}^0$, so that $Y(0)\neq \infty$; while $\mathcal{I}^* \tilde{\omega}^{\lambda}_{\phi} = \pm \lambda \tilde{\omega}^{\lambda}_{\phi}$ $\implies$ $Y(\mathcal{I}(\lambda))=\pm \lambda^2 Y(\lambda)$ $ \implies$ $\text{ord}_{\infty}(Y)\geq 2$. Since $Y$ is rational with only 2 simple poles, it can have only the double zero at $\infty$, and thus\footnote{One may also show that $g_{\phi,n-1}(\lambda)=\frac{n}{2} P_{\phi} ' (\lambda)$, but we won't need this.}\[ Y(\lambda) = \frac{Y(0)}{P_{\phi}(\lambda)}.\]

To evaluate $Y(0)$, and also anticipating Step 3, we extend $\vf_0^{\lambda}$ to a basis $\{\vf_j^{\lambda}\}_{j=0}^{n-1}$ of $(\VV_{\phi}^{\lambda,\QQ})^{\vee} \cong H_{n-1}(X_\phi^{\lambda},\QQ)$ on an angular sector of the punctured disk $D_{\mathfrak{r}_{\phi}}^{\times}$ satisfying $\vf_{n-j-1} = N^j \vf_{n-1}$ hence $\vf_j \in W_{2(j-n+1)}$ ($N=\log(T)$ the monodromy logarithm at $0$, and $W_{\bullet} = W(N)[n-1]_{\bullet}$).  From $NQ=-QN$ and $N^n = 0$, the $Q(\vf_j,\vf_k)$ are zero for $j+k<n-1$, and the $(-1)^j Q(\vf_j,\vf_{n-j-1})$ equal a common constant $Q_0^{-1}\in \QQ^{\times}$.  We may then modify $$\vf_j\mapsto \hat{\vf}_j = \vf_j + \sum_{i<j} \alpha_{ij}\vf_i \in W_{2(j-n+1)} \;\;\;\;\;(\alpha_{ij}\in \QQ)$$ so that $Q(\hat{\vf}_i,\hat{\vf}_{n-j-1}) = (-1)^i Q_0^{-1} \delta_{ij}$, with $N\hat{\varphi}_j = \hat{\varphi}_{j-1} + \sum_{i<j-1} \eta_{ij} \hat{\vf}_i$ ($\eta_{ij}\in\QQ$) and dual basis $\{\check{\vf}_j\}$ of $\VV_{\phi}^{\QQ}$.  (Note that $\check{\vf}_j \in W_{2(n-j-1)}\VV_{\phi}$, $-N\check{\vf}_j = \check{\vf}_{j+1} + \sum_{i>j+1} \eta_{ji} \check{\vf}_i$, while $Q(\check{\vf}_{n-i-1},\check{\vf}_j) = (-1)^i Q_0 \delta_{ij}$.) Writing locally \[ [\tilde{\omega}_{\phi}^{\lambda}] = \sum_{j=0}^{n-1} \left( \int_{\hat{\vf}_j} \tilde{\omega}_{\phi}^{\lambda} \right) \check{\vf}_j =: \sum_{j=0}^{n-1} \hat{\pi}_j (\lambda) \check{\vf}_j\]in $\V_{\phi}$, and $\pi_j(\lambda):=\int_{\vf_j} \tilde{\omega}_{\phi}^{\lambda}$, we have \[\pi_0(\lambda) = \hat{\pi}_0(\lambda) = A_{\phi}(\lambda)=:A^{(0)}_{\phi}(\lambda)\]and \[\hat{\pi}_j = \pi_j(\lambda) + \sum_{i<j} \alpha_{ij} \pi_i (\lambda);\]while in accordance with the monodromy properties of $\vf_j$,
\begin{equation}\label{17-1}
\pi_j(\lambda) = \sum_{k=0}^j \frac{\log^k (\lambda)}{(2\pi \mathbf{i})^k k!} A_{\phi}^{(j-k)}(\lambda)
\end{equation}
for some functions $A_{\phi}^{(\ell)}(\lambda)$ holomorphic on $D_{\mathfrak{r}_{\phi}}$.  Clearly, the limits $\lim_{\lambda\to 0} \log^{\ell}(\lambda) \left( \delta_{\lambda}^{n-1} \hat{\pi}_j \right) (\lambda)$ are zero for $j<n-1$, while we have $\lim_{\lambda\to 0} (2\pi\mathbf{i})^{n-1}\left( \delta_{\lambda}^{n-1}\hat{\pi}_{n-1}\right) (\lambda)=1=A^{(0)}_{\phi}(0)$; and so $Y(0)=(2\pi \mathbf{i})^{1-n}\times Q(\check{\vf}_0,\check{\vf}_{n-1})=\frac{\pm Q_0}{(2\pi\mathbf{i})^{n-1}}$.

Now the fiberwise restrictions $\Xi^{\lambda}:=\Xi|_{X^{\lambda}_{\phi}}$ ($\lambda\neq \infty$) have trivial class in $\text{Hom}_{\text{MHS}}\left(\QQ(0),H^n(X^{\lambda}_{\phi},\QQ(n))\right)$, so that $T_{\Xi}$ is a coboundary over any sufficiently small $B\subset \mathbb{A}^1_{\lambda}$. Since the regulator current $R_{\Xi}$ has $dR_{\Xi} = \frac{dx_1}{x_1}\wedge \frac{dx_n}{x_n} - (2\pi\mathbf{i})^n \delta_{T_{\Xi}}$ on $\X_{\phi}^-$, writing $T_{\Xi}|_{\rho^{-1}(B)}\equiv \d \Gamma_B$ (mod $\d\rho^{-1}(B)$) yields a current $\tilde{R}^B_{\Xi}:=R_{\Xi}|_{\rho^{-1}(B)} + (2\pi\mathbf{i})^n \delta_{\Gamma_B}$ with closed fiberwise pullbacks. This yields
\begin{equation}\label{18-1}
\nabla_{\delta_{\lambda}}\nu_{\phi}^{\lambda} = \nabla_{\delta_{\lambda}}[\tilde{R}_{\Xi^{\lambda}}^B ] = \left[ \left. \left( \tfrac{dx_1}{x_1}\wedge\cdots \wedge \tfrac{dx_n}{x_n} \,\lrcorner \,\widetilde{\delta_{\lambda}} \right)\right|_{X^{\lambda}_{\phi}}\right] = -(2\pi\mathbf{i})^{n-1}[\omega_{\phi}^{\lambda}].
\end{equation}
Using this (and $Q(\omega,\nabla_{\delta_{\lambda}}^k \tilde{\omega})=0$ for $k<n-1$), one computes that $\delta_{\lambda}^k V(\lambda) = \delta_{\lambda}^k Q(\nu,\tilde{\omega})= Q(\nu,\nabla_{\delta_{\lambda}}^k \tilde{\omega})$ for $k<n$ and $-(2\pi\mathbf{i})^{n-1}Q(\omega,\nabla_{\delta_{\lambda}}^{n-1}\tilde{\omega})+Q(\nu,\nabla_{\delta_{\lambda}}^n \tilde{\omega})$ for $k=n$. Using \eqref{15-1}, we have at once \[D_{\phi}(\lambda) V_{\phi}(\lambda)= \pm (2\pi \mathbf{i})^{n-1}\lambda Y(\lambda) P_{\phi}(\lambda) = \pm Q_0 \lambda.\]Notice that while $V_{\phi}(\lambda)$ is multivalued, the ambiguities are killed by $D_{\phi}(\lambda)$. Moreover, $\tilde{V}_{\phi}(\lambda)$ satisfies the same equation, so that $D_{\phi}(\lambda)A_{\phi}(\lambda)=0$ $\implies$ 
\begin{equation}\label{18-2}
D_{\phi}(\lambda)B_{\phi}(\lambda)=\mp Q_0 \lambda.
\end{equation}

\subsubsection*{Step 3: Arithmetic of coefficients}
By \eqref{13-1} and the integrality of $\phi$, we have $a_m \in \ZZ$ ($\forall m$). Expressing $b_m$ as $\frac{p_m}{q_m}$ with $p_m \in \ZZ$ and $q_m \in \NN$ relatively prime, we claim that for some fixed $\varepsilon_B \in \NN$,
\begin{equation}\label{18-3}
q_m \mid \varepsilon_B (m!)^n\;\;(\forall m).
\end{equation}
Indeed, if we write $P_{\phi}(\lambda)=c_n ' \lambda^2 + c_n '' \lambda +1$ and $g_{\phi,j}(\lambda) = c_j '\lambda + c_j ''$ (all $c_j ', c_j '' \in \ZZ$; $c_n ' =\pm 1$), substituting $\sum_{m\geq 1}b_m \lambda^m = B_{\phi}(\lambda)$ in \eqref{18-2} yields $b_1 = \pm Q_0 \in \QQ$ and the recurrence \[-m^n b_m = \left( \Sigma_{j=0}^n c_j ' (m-2)^j \right) b_{m-2} + \left( \Sigma_{j=0}^n c_j '' (m-1)^j \right) b_{m-1}.\]Taking $\varepsilon_B = q_1$, this establishes \eqref{18-3}.

We claim that, in addition (modifying $\varepsilon_B$ if necessary),
\begin{equation} \label{19-1}
q_m \mid \varepsilon_B L_{rm}^n \;\; (\forall m).
\end{equation}
To show this, we make use of the facile $r$-cover $\mathcal{J}$, which induces an isomorphism of VHS $\mathcal{J}^*\V_{\phi}\cong \W_{\Phi}$ hence of their extended Hodge bundles $\mathcal{O}_{\PP^1_{\xi}}(r)\cong \mathcal{J}^* \F_{e,\phi}^{n-1} \cong \F_{e,\Phi}^{n-1}$ over $\PP^1_{\xi}$, of which $\mathcal{J}^*\omega_{\phi,\xi^r}$ and \[\varpi_{\xi} := (2\pi\mathbf{i})^{1-n} \text{Res}_{\mathrm{X}_{\xi}}\left( \tfrac{dy_1/y_1\wedge \cdots \wedge dy_n/y_n}{\Phi(\xi,\uy)}\right)\]are sections. Since $\gcd_{\um\in \mathcal{A}_{\Diamond}}\{P_{\um}(\xi)\}=1$ (see Defn. \ref{def2}), $\varpi_{\xi}$ has no zeroes over $\mathbb{A}^1_{\xi}$, so that both sections share the divisor $r[\infty]$. Since $(\mathcal{J}_{\xi}^{\pm 1})_*$ exchanges the generators $\mathcal{I}_*(\vf_0)$ and $\gamma_0$ of the integral invariant cycles about $\xi =0$ and $\t=0$, and $\lim_{t\to 0} \int_{\mathcal{I}_*(\vf_0)}\omega_{\phi,t} = 1 = \lim_{\xi \to 0}\int_{\gamma_0} \varpi_{\xi},$ we find that $\varpi_\xi = \mathcal{J}^* \omega_{\phi,\xi^r}$. Via $\mathcal{I}^*\tilde{\omega}_{\phi}^{\xi^r} = \omega_{\phi,\xi^r}$, Corollary \ref{cor1}, and \eqref{17-1}, we therefore have ($0\leq \ell\leq n-1$) \[\hat{\pi}_{\ell} (\xi^r )= \int_{\hat{\vf}_{\ell}} \tilde{\omega}_{\phi}^{\xi^r} = \int_{\mathcal{J}_* \hat{\gamma}_{\ell}} \omega_{\phi,\xi^r} = \int_{\hat{\gamma}_{\ell}} \varpi_{\xi}\]for some $\hat{\gamma}_{\ell} = \sum_{k\leq \ell} (2\pi\mathbf{i})^{k-\ell}\beta_{k\ell}\gamma_k \in W_{2(\ell-n+1)} \W_{\Phi,\QQ}^\vee$ ($\beta_{k\ell}\in \CC$; $\beta_{\ell\ell}=r^{\ell}$). That is,
\begin{equation} \label{20-1}
\hat{\pi}_{\ell}(\xi^r) \;=\; \frac{1}{(2\pi \mathbf{i})^{\ell}}\mspace{-20mu} \sum_{\tiny\begin{array}{c} m\geq 0, \\ 0\leq j\leq k\leq \ell \end{array}} \mspace{-20mu}\frac{1}{j!}\,\mathfrak{f}_m^{(k-j)}\, \beta_{k\ell}\,\, \xi^m\, \log^j (\xi).
\end{equation}

Returning to the $\lambda$-disk for a moment, we have $[\tilde{\omega}_{\phi}^0]\equiv Q_0 \hat{\vf}_{n-1}$ in $H_{n-1}(X_{\phi}^0)$ so that $\tilde{\nu}_{\phi}^0 = Q_0^{-1}v_0 \check{\vf}_{n-1}$ in $H^{n-1}(X_{\phi}^0 )$.  This fixes the constant of integration, so that \eqref{18-1} gives
\begin{equation} \label{20-2}
\begin{split}
\tilde{\nu}_{\phi}^{\lambda} - Q_0^{-1} v_0 \check{\vf}_{n-1} &= \int_0 \left( \nabla_{\delta_{\lambda}} \nu_{\phi}^{\lambda}\right) \frac{d\lambda}{\lambda} \\
&= -(2\pi\mathbf{i})^{n-1} \int_0 [\omega_{\phi}^{\lambda}]\frac{d\lambda}{\lambda}\\
&= (2\pi\mathbf{i})^{n-1}\sum_{\ell=0}^{n-1} \int_0 \hat{\pi}_{\ell}(\lambda)d\lambda \;\check{\vf}_{\ell}
\end{split}
\end{equation}
in $\V_{\phi}$. Since $Q(Q_0^{-1}v_0 \check{\vf}_{n-1},\tilde{\omega}_{\phi}^{\lambda})=v_0 A_{\phi}(\lambda)$ and $Q(\tilde{\nu}_{\phi}^{\lambda},\tilde{\omega}_{\phi}^{\lambda})=\tilde{V}_{\phi}(\lambda)$, pairing \eqref{20-2} with $[\tilde{\omega}_{\phi}^{\lambda}]=\sum_{\ell'=0}^{n-1} \hat{\pi}_{\ell'}(\lambda) \check{\vf}_{\ell'}$ yields the key formula \[-B_{\phi}(\lambda) = (-2\pi\mathbf{i})^{n-1} Q_0 \sum_{\ell=0}^{n-1} (-1)^{\ell} \hat{\pi}_{n-\ell-1}(\lambda) \int_0 \hat{\pi}_{\ell}(\lambda)d\lambda.\]Substituting $\xi^r =\lambda$, this becomes \[\pm\sum_{\mu\geq 1}b_{\mu}\xi^{r\mu}=(2\pi\mathbf{i})^{n-1}Q_0 \sum_{\ell=0}^{n-1} (-1)^{\ell} \hat{\pi}_{n-\ell-1}(\xi^r)\int_0 \hat{\pi}_{\ell}(\xi^r)r\xi^{r-1}d\xi,\]and using $\int_0 \log^a(x)x^{b-1}dx=x^b \sum_{c=0}^a \frac{(-1)^{a-c}a!}{(a-c)! b^{c+1}}\log^{a-c}(x)$ together with \eqref{20-1}, \[=Q_0 r\sum_{m\geq 0}\xi^m \sum_{(*)_m}\frac{(-1)^{\ell+j'-i'}}{(j'-i')!j''!}\frac{\mathfrak{f}_{m''}^{(k''-j'')}\mathfrak{f}_{m'}^{(k'-j')}\beta_{k'',n-\ell-1}\beta_{k',\ell}}{(m'+r)^{i'+1}}\log^{j''+j'-i'}(\xi),\]where the $\sum_{(*)_m}$ (finite for each $m$) is over $m'+m''=m-r$, $0\leq \ell\leq n-1$, $0\leq j''\leq k''\leq n-\ell-1$, and $0\leq i'\leq j'\leq k'\leq \ell$. Since this plainly has to be a power series in $\xi^r$, the $\log^* \xi$ terms must cancel out (forcing $j''=0=j'-i'$), leaving us with
\begin{equation} \label{21-1}
=Q_0 r \sum_{m\geq 0}\xi^m \sum_{(*)_m} \frac{(-1)^{\ell}}{(m'+r)^{j'+1}}\mathfrak{f}^{(k'')}_{m''}\mathfrak{f}^{(k'-j')}_{m'} \beta_{k'',n-\ell-1}\beta_{k',\ell} .
\end{equation}
Let $\{\sigma_1,\ldots,\sigma_d\}$ ($\sigma_1=1$) be a basis of the $\QQ$-vector space $\mathscr{B}$ generated by all the products $\{\beta_{ij}\beta_{k\ell}\}$, and write $\beta_{ij}\beta_{k\ell} = \sum_{s=1}^d \mathfrak{q}_s^{ijk\ell}\sigma_s$. Since $\eqref{21-1} = \pm \sum b_{\mu} \xi^{r\mu}$ with $b_{\mu}\in \QQ$ (and $\mathfrak{f}_b^{(a)}\in\QQ$), the resulting ``$\sigma_s$-components'' of \eqref{21-1} vanish for $s>1$, while
\begin{equation}\label{21-2}
\pm b_{\mu} = Q_0 r \sum_{(*)_{r\mu}} \frac{(-1)^{\ell}}{(m'+r)^{j'+1}} \mathfrak{f}^{(k'')}_{m''}\mathfrak{f}^{(k'-j')}_{m'} \mathfrak{q}_1^{k'',n-\ell-1,k',\ell}.
\end{equation}
Choose $\epsilon\in\NN$ sufficiently large that all the $\{\epsilon Q_0 \mathfrak{q}_1^{ijk\ell}\}$ (a finite set) are integers.  Now $m'+r\leq r\mu$ $\implies$ $\frac{L_{r\mu}^{j'+1}}{(m'+r)^{j'+1}}\in\ZZ$, while Corollary \ref{cor1} $\implies$ $\varepsilon \mathfrak{f}_{m''}^{(k'')}L_{r\mu}^{k''}, \varepsilon \mathfrak{f}^{(k'-j')}_{m'} L_{r\mu}^{k'-j'}\in \ZZ$. Since in each term of RHS\eqref{21-2} we have $k''+k'-j'+j'-1=k''+k'+1\leq (n-\ell-1)+\ell+1=n$, multiplying the original $\varepsilon_B$ by $\epsilon\varepsilon^2$ gives $\varepsilon_B b_{\mu} L_{r\mu}^n \in \ZZ$, as desired.

\subsubsection*{Step 4: Irrationality of $v_0$}
Let $\Lambda_m := \text{gcd}(m!,L_{rm})$, so that by Step 3 we have $\varepsilon_B \Lambda_m^n b_m =: B_m \in \ZZ$ ($\forall m$). Writing $\mathcal{P}_m :=\{ p \text{ prime}\mid p\leq m\}$, set \[\pi(m)=|\mathcal{P}_m | \;\; \text{and} \;\; \chi(m):=\sum_{p\in\mathcal{P}_m} \log(p).\]Evidently \[e^{\chi(m)}\leq \Lambda_m \leq \prod_{p\in \mathcal{P}_m} p^{\lfloor \log_p (rm)\rfloor } \leq (rm)^{\pi(m)},\]hence
\begin{equation} \label{22-1}
e^{\chi(m)/m} \leq \Lambda_m ^{1/m} \leq (rm)^{\pi(m)/m} .
\end{equation}
By the Prime Number Theorem and its proof, $\frac{\pi(m)}{m} \sim \frac{1}{\log(m)}$ and $\lim_{m\to \infty} \frac{\chi(m)}{m}=1$. So the outer terms of \eqref{22-1} limit to $e$, and so does $\Lambda_m^{1/m}$.

Finally, suppose $v_0 = \frac{P}{Q}$ ($P\in\ZZ$, $Q\in \ZZ\setminus \{0\}$). Then 
\begin{flalign*}
\limsup_{m\to \infty} &\left| \varepsilon_B \Lambda_m^n a_m P - B_m Q\right|^{\frac{1}{m}} \\
&= \left( \lim_{m\to \infty} \Lambda_m^{1/m}\right)^n \cdot \lim_{m\to\infty} |\varepsilon_B Q|^{\frac{1}{m}}\cdot \limsup_{m\to \infty} |a_m v_0 - b_m |^{\frac{1}{m}} \\
&= e^n \cdot 1 \cdot \mathfrak{r}_{\phi}\;\;\;\;\; \text{(by Step 1)}\\
&<1 \;\;\;\;\;\;\;\;\;\text{(by assumption)},
\end{flalign*}
and for some sufficiently large $m$ we therefore have \[0<\left| \varepsilon_B \Lambda_m^n a_m P - B_m Q\right| <1 ,\]a contradiction. Q.E.D.

\subsection{Casting a wider net} \label{S3.last}
By \cite[Thm. 5.2]{7K}, we can think of $\tilde{\nu}_{\phi}^0$ as computing the extension of $\QQ(0)$ by $W_{-2n}H^{n-1}(X_{\phi}^0,\QQ(n))\cong \QQ(n)$ associated to $\Xi|_{X_{\phi}^0} \in H^n_{\M}(X_{\phi}^0,\QQ(0))$. Since $(\mathcal{I}^*\omega)|_{X_{\phi}^0}$ generates the top graded piece $\text{Gr}^W_0 H_{n-1}(X_{\phi}^0,\QQ)$, pairing with it sends the above $\QQ(n)$ isomorphically to $(2\pi\mathbf{i})^n \QQ$. So $\tilde{V}_{\phi}(0)$ realizes $\mathrm{AJ}\left( \Xi|_{X_{\phi}^0}\right)\in \CC/(2\pi\mathbf{i})^n\QQ$, which one interprets (arguing as in [op. cit.], at least for $n\leq 3$) as a Borel regulator value $r_{\text{Bor}}$ for $K_{2n-1}$ of the number field $k$ required to resolve $X_{\phi}^0$. Since this is just the field of definition of $\mathcal{I}$, the numbers $\tilde{V}_{\phi}(0)$ potentially appearing in Theorem \ref{th2} are limited (for $n\geq 2$) to $\zeta(n)$.

The first step in a generalization of this result would be to expand the notion of involutivity:
\begin{itemize}[leftmargin={*}]
\item drop the requirement $k=\QQ$,
\item replace $t\mapsto \pm t^{-1}$ by $t\mapsto \frac{1}{Ct}$ ($C\in\ZZ$), and
\item allow $\mathcal{I}$ to be a correspondence inducing an isomorphism between the $\QQ$-VHS $\V_{\phi}$ and its pullback.
\end{itemize}
Unfortunately, we have to pay for this expansion with a stronger bound:
\begin{prop} \label{prop}
Let $\phi$ be as in Theorem \ref{th2}, but with the weaker involutivity just described. Assume in addition that $\mathfrak{r}_{\phi} < \frac{e^{-n}}{|C|}$. Then $\tilde{V}_{\phi}(0)\notin\QQ$.
\end{prop}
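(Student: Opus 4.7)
The plan is to adapt the four-step argument of Theorem~\ref{th2} to the weaker involutivity $\mathcal{I}:\lambda\mapsto C/\lambda$, with an extra factor of $|C|^m$ entering the arithmetic to account for the strengthened bound. In Step~1, the four points of $\D_\phi$ are $\{0,\lambda_0,C/\lambda_0,\infty\}$ with $|\lambda_0|=\mathfrak{r}_\phi$, and the irreducibility argument applied to $(\mathcal{I}^*)^2$ on $\V_\phi^{\CC}$ now yields $\mathcal{I}^*\omega=(M/\lambda)\omega$ with $M^2/C\in\QQ^{\times}$, so $M\in k$ may be irrational. A change-of-variables through $\mathcal{I}$, analogous to the torus computation of Theorem~\ref{th2} but carrying an extra rescaling by $1/\mathcal{I}(\lambda)=\lambda/C$ from the relation $\tilde\omega=t\omega$, expresses the normalized MUM-invariant holomorphic period as
\[A_\phi(\lambda)=\sum_{m\geq 0}a_m\lambda^m,\qquad a_m=\frac{[\phi^m]_{\uo}}{C^m}\in\QQ.\]
Consequently the rescaled coefficients $\hat a_m:=C^m a_m=[\phi^m]_{\uo}\in\ZZ_{>0}$ (positive integers, by positivity of $\phi(-\ux)$) replace $a_m$ as the \emph{integer side} of the final estimate. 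Positivity of $\phi(-\ux)$ and strong temperedness then furnish $\tilde V_\phi(\lambda)=\sum v_m\lambda^m$ holomorphic on a disk around $\lambda=0$, with $\limsup|v_m|^{1/m}$ controlled via the unique-analytic-continuation argument of Theorem~\ref{th2}~Step~1.

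Steps~2 and~3 require only notational adaptations. The Yukawa and Picard--Fuchs analysis gives $D_\phi\tilde V_\phi=\pm Q_0\lambda$ with $Q_0\in\QQ^{\times}$ (Galois invariance over $k$, together with $D_\phi\in\ZZ[t,\delta_t]$, rules out non-rational residues), after clearing the factor of $C$ appearing in $P_\phi(\lambda)=\lambda^2-(\lambda_0+C/\lambda_0)\lambda+C$ into $\varepsilon_B$. The denominator bounds $q_m\mid\varepsilon_B(m!)^n$ (from the PF recurrence) and $q_m\mid\varepsilon_B L_{rm}^n$ (from the facile $r$-cover) combine to $q_m\mid\varepsilon_B\Lambda_m^n$ with $\Lambda_m:=\gcd(m!,L_{rm})$, and $\Lambda_m^{1/m}\to e$ as in Theorem~\ref{th2}. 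In Step~4, assuming $v_0=P/Q\in\QQ^\times$, we form the integer
\[N_m:=\varepsilon_B\Lambda_m^n\hat a_m P-B_m C^m Q=\varepsilon_B\Lambda_m^n C^m Q\, v_m,\]
with $B_m:=\varepsilon_B\Lambda_m^n b_m\in\ZZ$. Both terms on the LHS are integers, so $N_m\in\ZZ$; analyticity of $\tilde V_\phi$ gives $N_m\neq 0$ for infinitely many $m$, while
\[\limsup_{m\to\infty}|N_m|^{1/m}=e^n\cdot|C|\cdot\limsup_{m\to\infty}|v_m|^{1/m}.\]
The hypothesis $\mathfrak{r}_\phi<e^{-n}/|C|$ is precisely what is needed to force this limit below~$1$, contradicting $N_m\in\ZZ\setminus\{0\}$ for $m\gg 0$.

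The main obstacle lies in Step~1: tracking the Galois-theoretic constant $M\in k$ through the torus-integral derivation of $A_\phi$, verifying the positive integrality $\hat a_m\in\ZZ_{>0}$ despite $M$'s potential irrationality, and pinning down $\limsup|v_m|^{1/m}$ in the generalized setup (which is what ultimately dictates the $1/|C|$ factor in the hypothesized bound). The remaining steps are routine adaptations of Theorem~\ref{th2}'s arguments, with the $|C|^m$ bookkeeping introduced by the rescaling $\hat a_m=C^m a_m$ being the primary mechanism by which $|C|$ enters the final estimate.
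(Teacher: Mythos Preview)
Your approach is essentially correct and matches what the paper has in mind: the paper does not write out a proof, saying only that it is ``a straightforward, but tedious, generalization of the above,'' and your identification of the key mechanism --- that after normalizing $A(0)=1$ one gets $A(\lambda)=\mathsf{A}(\lambda/C)$ (where $\mathsf{A}(t)=\sum[\phi^m]_{\uo}t^m$), so $\hat a_m:=C^m a_m=[\phi^m]_{\uo}\in\ZZ_{>0}$, and the resulting $|C|^m$ factor in $N_m$ is precisely what forces the strengthened bound $\mathfrak{r}_\phi<e^{-n}/|C|$ --- is exactly right. Two small clarifications: first, the paper records just after the Proposition that in fact $c^2/C=\pm 1$ (not merely $\in\QQ^\times$), since $(\mathcal{I}^*)^2$ is a \emph{polarized} automorphism of the irreducible $\V_\phi^{\CC}$; the potentially irrational $c$ then drops out of $A(\lambda)$ upon the rational normalization $A(0)=1$, resolving your stated ``main obstacle.'' Second, in Step~3 the Picard--Fuchs operator for $\tilde\omega$ in the $\lambda$-variable is (up to scale) $D_\phi(\lambda/C)$ rather than $D_\phi(\lambda)$, so clearing denominators inserts powers of $C$ into the recurrence and the direct bound reads $q_m\mid\varepsilon_B C^{O(m)}(m!)^n$ rather than $q_m\mid\varepsilon_B(m!)^n$; however, since the facile-cover bound $q_m\mid\varepsilon_B L_{rm}^n$ already controls the finitely many primes dividing $C$, the asymptotic $\Lambda_m^{1/m}\to e$ survives and your Step~4 estimate $\limsup|N_m|^{1/m}=e^n|C|\,\mathfrak{r}_\phi$ goes through unchanged.
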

The proof is a straightforward, but tedious, generalization of the above. One nice formal consequence is that $\mathcal{I}^* \omega = \frac{c}{\lambda} \omega $ for $c\in\CC^{\times}$ with $\frac{c^2}{C}=\pm 1$, so that $\tilde{V}_{\phi}(0) = \langle \tilde{\nu}_{\phi}^0 , \tilde{\omega}_{\phi}^0 \rangle = c^{-1} \langle \tilde{\nu}_{\phi}^0 ,\mathcal{I}^* \omega_{\phi,0}\rangle = c^{-1} r_{\text{Bor}}.$ For example, if $n=2$ and $\mathcal{I}$ is defined over $\QQ(\mathbf{i})$, then one expects $r_{\text{Bor}}\underset{\QQ^{\times}}{\sim} \mathbf{i}{G}$, where $G = L(\chi_{-4},2)$ is Catalan's constant. But then, one also expects $c\underset{\QQ^{\times}}{\sim}\mathbf{i}$, so that $\tilde{V}_{\phi}(0) \underset{\QQ^{\times}}{\sim} G$ --- a not-insignificant ``calibration'', as irrationality of $\mathbf{i}G$ and of $G$ are rather different things. Naturally, we don't have a proof of the latter, but we will briefly discuss some higher normal functions with $G$ as a special value below.

\begin{rem}\label{IJrem}
The function of $\mathcal{I}$ and $\mathcal{J}$ in the above proof is to match periods of a pullback of $\V_{\phi}$ with those of $\V_{\phi}$ resp. the facile family.  If one has such a matching by other means, there is obviously no need for the maps of varieties.
\end{rem}

Another natural way to relax the hypotheses is to permit the Newton polytope of $\phi$ to be non-reflexive, as long as $\uo$ remains its unique interior integral point. This ensures that, while $X_{\phi}^{\lambda}$ may not be Calabi-Yau, $h^{n-1,0}(X_{\phi}^{\lambda})$ remains $1$.  More significantly, one could abandon the ``principality'' constraint that Hodge numbers of $\V_{\phi}$ \emph{all} equal $1$, in order to make contact with results (such as \cite{Z2}) involving linear forms in more than one zeta value; two likely sources of interesting examples will be discussed in $\S$\ref{S5.4}.

\section{Low dimension} \label{S4}
In order to implement Theorem \ref{th2} in any specific cases, we must be able to check strong temperedness and compute $\tilde{V}_{\phi}(0)$.  For the temperedness, we begin by examining the toric boundary structure more closely.  (This part draws heavily on \cite[$\S\S$2-3]{DK}.)  Let $\phi$ and $\PP_{\Delta}$ be as in the first paragraph of $\S$\ref{S3}, with associated family $\X_{\phi} \underset{\rho}{\to}\PP^1$ of anticanonical hypersurfaces.

Consider the toric variety $\hat{\PP}_{\Delta}$ associated to $\Delta^{\circ}$ without the triangulation, with canonical blow-down morphisms $\PP_{\Delta} \underset{b}{\twoheadrightarrow} \hat{\PP}_{\Delta}$, $\DD_{\Delta} \underset{b}{\twoheadrightarrow} \hat{\DD}_{\Delta} = \hat{\PP}_{\Delta} \setminus \mathbb{G}_m^n$. Let $\sigma \subset \Delta$ be a codimension-$j$ face with ($(j-1)$-dimensional) dual $\sigma^{\circ}\subset \Delta^{\circ}$, and $\mathbb{G}_m^{n-j} \cong \hat{\DD}_{\sigma}^{\times} \subset \hat{\DD}_{\sigma}\subset \hat{\DD}_{\Delta}$ the corresponding (open resp. closed) codimension-$j$ stratum. The ($(n-j+1)$-dimensional) strata $\DD_{\sigma}^{\alpha} \subseteq b^{-1}(\hat{\DD}_{\sigma})$ correspond to the ($(j-i-1)$-dimensional) faces $\sigma_{\alpha}^{\circ} \subseteq \sigma^{\circ}$ in the triangulation of $\Delta^{\circ}$. A basis of the flag $\text{ann}(\sigma^{\circ})\subseteq \text{ann}(\sigma_{\alpha}^{\circ})\subseteq \ZZ^n$ produces $n-j+1$ toric coordinates $\{x_1^{\sigma},\ldots,x_{n-j}^{\sigma};y_1,\ldots,y_i\}$ on $\DD_{\sigma}^{\alpha,\times}$, with $b$ induced by forgetting the $\{y_i\}$. Writing $Y_{\phi}:=X_{\phi}^{\lambda} \cap \DD_{\Delta} \underset{b}{\twoheadrightarrow} \hat{Y}_{\phi}$, each\footnote{The $\{ \hat{Y}_{\sigma},Y_{\sigma}^{\alpha}\}$ need not be irreducible, as $\lambda - \phi$ is not assumed $\Delta$-regular for generic $\lambda$.}\[Y_{\sigma}^{\alpha} := Y_{\phi} \cap \DD_{\sigma}^{\alpha} = \cup_{i=1}^{\mu_{\sigma}}Y_{\sigma}^{\alpha,i} \underset{b}{\twoheadrightarrow} \cup_{i=1}^{\mu_{\sigma}} \hat{Y}_{\sigma}^i = \hat{Y}_{\sigma} =:\hat{Y}_{\phi} \cap \DD_{\sigma}\]is cut out of $\DD_{\sigma}^{\alpha}$ resp. $\hat{\DD}_{\sigma}$ by a ``face polynomial'' $\phi_{\sigma}(x_1^{\sigma},\ldots ,x_{n-j}^{\sigma})$ given (up to a shift) by rewriting the terms $\{ c\ux^{\um} \mid \um\in\sigma \}$ of $\phi$ in terms of $\ux^{\sigma}$.

A precondition for temperedness of $\phi$ (as defined above) is that the iterated residues of the coordinate symbol $\{\ux\}|_{(X_{\phi}^{\lambda})^{\times}}$ along strata $Y_{\sigma}^{\alpha}$ of $X_{\phi}^{\lambda}\setminus (X_{\phi}^{\lambda})^{\times}$ all vanish, which is equivalent to the vanishing of the $\{ \ux^{\sigma}\}$ on $\hat{Y}_{\sigma}^{\times}$:

\begin{defn}
$\phi$ is \emph{weakly tempered}\footnote{Note that this is the definition of \emph{temperedness} in \cite[$\S$3]{DK}. Our use of ``tempered'' here correlates to the property of ``$\{\ux\}$ completes to a family of motivic cohomology classes'' in [op. cit.].} if the symbols 
\begin{equation}\label{26-1}\{ x_1^{\sigma},\ldots ,x_{n-j}^{\sigma}\}|_{\hat{Y}_{\sigma}^{\times}} = 0 \in K_{n-j}^M (\bar{\QQ}(\hat{Y}_{\sigma}^i))\otimes \QQ\end{equation}
for all $1\leq j\leq n-1$, $\sigma\in \Delta(j)$, and $i=1,\ldots,\mu_{\sigma}$.
\end{defn}

This is a condition on the face polynomials $\phi_{\sigma}=\prod\phi_{\sigma,i}$.  For example, \eqref{26-1} holds:
\begin{itemize}[leftmargin={*}]
\item for $j=n-1$ if all edge polynomials are cyclotomic; and
\item for $j=n-2$ if (for all $2$-faces) the $\phi_{\sigma,i}$ are \emph{Steinberg} (i.e. $\phi_{\sigma,i}(x,y)=0$ makes $\{ x,y\}=0$ in $K_2$).
\end{itemize}
We will say that $\phi_{\sigma}$ is $\QQ$\emph{-Steinberg} if its factors are Steinberg and defined over $\QQ$.

For $n\geq 4$, it may not even be the case that weak temperedness implies the existence of fiberwise lifts $\Xi^{\lambda} \in\mathrm{CH}^n(\widetilde{X_{\phi}^{\lambda}},n)\otimes \QQ$ of $\{\ux\}|_{X_{\phi}^{\lambda,\times}}$ to a desingularization. This is always true for $n\leq 3$; for $n=4,5$ it holds if (for instance) all boundary strata are rational and defined over a totally real number field. (See \cite[$\S$3]{DK} for other refinements.) For particular families one can certainly check temperedness in any dimension. However, in the remainder of this section, we prefer to restrict to the case $n\leq 3$.

Let $\mathcal{K}_{\phi} :=\rho(T_{\{\ux\} })\subset \PP^1$ and $U_{\phi} := \PP^1 \setminus \mathcal{K}_{\phi}$ (as in $\S$\ref{S3}). We are interested in conditions under which not only is $\phi$ strongly tempered, but where the (unique) single-valued lift $\tilde{\nu}_{\phi}^{\lambda}$ over $U_{\phi}$ is given by fiberwise restrictions of the regulator current \[R_{\{\ux\} } = \sum_{j=1}^n (-1)^{(n-1)(j-1)} (2\pi \mathbf{i})^{j-1}\log(x_j) \frac{dx_{j+1}}{x_{j+1}} \wedge \cdots \wedge \frac{dx_n}{x_n} \delta_{T_{x_1} \cap \cdots \cap T_{x_{j-1}}}\]up to push-fowards of currents from $Y_{\phi}$ and coboundary currents. The biggest nuisance turns out to be the correction term $(2\pi\mathbf{i})^n\delta_{\Gamma^{\lambda}}$ added to $R_{\Xi^{\lambda}}$ to produce a closed current; part of this correction is (the current of integration over) a chain on $X_{\phi}^{\lambda}$ bounding on an unknown $(n-2)$-cycle on $Y_{\phi}$. In order that this not contribute to $\tilde{V}_{\phi}$, we have to assume that $H_{n-2}$ of $Y_{\phi}$ (or part of it) vanishes.\footnote{To get a feel for this condition, and the proof that follows, one may consult \cite[$\S$4]{BKV} where a similar argument is run on a concrete example.}

To make these conditions relatively weak, we define some ``bad'' subsets of the boundary.  Let $\mathscr{I}_{\phi}\subset Y_{\phi}$ be the generic $\Delta$-irregularity locus of $\lambda -\phi$ --- that is, the closure of the union of all singularities and nonreduced components of all $Y_{\sigma}^{\times}$ (computable by taking partials of $\phi_{\sigma}$). Let $\mathscr{A}\subset \mathscr{I}$ be the locus of generic singularities of $X_{\phi}^{\lambda}$ ($\mathscr{A}$ is empty for $n=1$ or $2$). Denoting by $\mathbb{I}_{\Delta}$ the intersection with $\DD_{\Delta}$ of the closure of the locus $\cup_{i=1}^n \{ x_i = 1\}$ in $\PP_{\Delta}$, we write $\mathscr{J}_{\phi}$ for the union of all components of $Y_{\phi}$ not contained in $\mathbb{I}_{\Delta}$, and (for $n=3$) not of ``Steinberg type'' $\{ x_{i_1}+ x_{i_2} = 1,\, x_{i_3} = 0 \text{ or }\infty\}$.

\begin{thm}\label{th3}
Let $\phi \in \ZZ[x_1^{\pm1},\ldots ,x_n^{\pm1}]$ be a reflexive Laurent polynomial for $n=1$, $2$, or $3$.
\begin{enumerate}[label=(\alph*)]
\item Assume that edge polynomials \textup{[}resp. 2-face polynomials\textup{]} of $\phi$ are cyclotomic \textup{[}resp. $\QQ$-Steinberg\textup{]}, and that $\mathscr{I}\subset \mathbb{I}_{\Delta}$ and $\mathscr{I}\cap \mathscr{J}\subset \mathscr{A}$; if $n=3$, assume $\mathscr{A}$ consists only of $A_1$ singularities and lies in $\mathbb{I}_{\Delta}\cap \text{sing}(\DD_{\Delta})$.  Then $\phi$ is tempered.
\item If $n=2$, assume in addition that $\mathscr{J}$ is one point.  If $n=3$, assume in addition that $H_1(\mathscr{J}\setminus \mathscr{J}\cap \mathscr{A})=\{0\}$. Then $\phi$ is strongly tempered, with single-valued THNF
\begin{equation}\label{28-1} \tilde{V}_{\phi}(\lambda) = \int_{X_{\phi}^{\lambda}} R\{\ux\} \wedge \tilde{\omega}_{\phi}^{\lambda} \end{equation}
over $U_{\phi}$.
\end{enumerate}
\end{thm}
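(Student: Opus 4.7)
The plan is to prove (a) by first establishing weak temperedness face-by-face from the cyclotomic and $\QQ$-Steinberg hypotheses, then promoting it to a bona fide motivic lift $\Xi \in H^n_{\M}(\X_{\phi}^-, \QQ(n))$ using the geometric conditions on $\mathscr{I}$ and (for $n=3$) on $\mathscr{A}$; and to prove (b) by showing that $R_{\{\ux\}}$ itself represents $\tilde{\nu}^{\lambda}_{\phi}$ up to correction currents that pair trivially with $\tilde{\omega}^{\lambda}_{\phi}$.

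For weak temperedness I would check \eqref{26-1} stratum by stratum. A cyclotomic edge polynomial $\phi_{\sigma}(x)$ makes $\hat{Y}^{\times}_{\sigma}$ a union of torsion points of $\GG_m$, where $\{x\}$ vanishes in $K_1^M \otimes \QQ$. A $\QQ$-Steinberg 2-face polynomial factors over $\QQ$ into Steinberg pieces, each making $\{x,y\}=0$ in $K_2^M \otimes \QQ$ by definition. Higher-codimension vanishing then follows by iterated tame-symbol residues, which handles all relevant $j$ when $n \leq 3$.

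The main obstacle is promoting weak temperedness to genuine temperedness for $n=3$. The hypothesis $\mathscr{I}\subset \mathbb{I}_{\Delta}$ confines generic irregularities of $\lambda - \phi$ to the locus where $\log(x_i)$ already vanishes, while $\mathscr{I}\cap \mathscr{J}\subset \mathscr{A}$ isolates the remaining obstructions to the generic singular locus of $X^{\lambda}_{\phi}$. The $A_1$-hypothesis permits a small resolution $\widetilde{\X_{\phi}^-}$ compatible with the toric boundary, and on this resolution one assembles precycles in Bloch's higher Chow complex that extend $\{\ux\}$ across the boundary: the cyclotomic and Steinberg relations supply explicit chains realizing the vanishing of tame symbols, and the hypothesis $\mathscr{A}\subset \mathbb{I}_{\Delta}\cap \mathrm{sing}(\DD_{\Delta})$ guarantees compatibility at the exceptional divisors of the resolution. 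Verifying consistency of these chains across codimension-2 and codimension-3 boundary intersections, following the framework of \cite[$\S$3]{DK}, is where the most bookkeeping is required.

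For (b), I would exhibit a decomposition $R_{\Xi^{\lambda}} = R_{\{\ux\}}|_{X^{\lambda}_{\phi}} + \iota_* r^{\lambda} + \d S^{\lambda}$, together with the closing correction $(2\pi\mathbf{i})^n \delta_{\Gamma^{\lambda}}$ described in Step 2 of the proof of Theorem \ref{th2}. The coboundary $\d S^{\lambda}$ integrates to zero by Stokes. The push-forward $\iota_* r^{\lambda}$ is supported on $\mathbb{I}_{\Delta}$ or on Steinberg-type components of $Y_{\phi}$, where $\tilde{\omega}^{\lambda}_{\phi}$ is regular, so its pairing vanishes. The remaining piece involves a chain $\Gamma^{\lambda}\subset X^{\lambda}_{\phi}$ bounding on an $(n-2)$-cycle in the ``bad'' part of the boundary; the hypothesis that $\mathscr{J}$ is a single point for $n=2$, or $H_1(\mathscr{J}\setminus \mathscr{J}\cap \mathscr{A})=0$ for $n=3$, forces this $(n-2)$-cycle to be a boundary, killing its contribution to $\langle \tilde{\nu}^{\lambda}_{\phi},\tilde{\omega}^{\lambda}_{\phi}\rangle$. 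What survives is precisely the integral \eqref{28-1}, which is manifestly single-valued on $U_{\phi}$; strong temperedness and the claimed formula follow simultaneously.
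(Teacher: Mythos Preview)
Your outline for (a) and the strong-temperedness half of (b) matches the paper's approach, which likewise defers the precycle bookkeeping to \cite[\S3]{DK} and \cite[Prop.~4.12]{DK}. The architecture of your argument for the formula \eqref{28-1} is also the right one: decompose the regulator of the lifted cycle as $R_{\{\ux\}}$ plus correction terms and show the corrections pair trivially with $\tilde{\omega}^{\lambda}_{\phi}$.

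However, your claim that ``the coboundary $\d S^{\lambda}$ integrates to zero by Stokes'' hides the one genuinely analytic step. The potential $S^{\lambda}$ (in the paper's notation, $\tfrac{1}{2\pi\mathbf{i}}R_{\gamma}$) is only defined on $X^{\lambda}_{\phi}\setminus\mathscr{J}$ and is singular along $\mathscr{J}$, so Stokes does not apply on the nose. What one must do is excise an $\epsilon$-tubular neighborhood of $\mathscr{J}$, integrate by parts there, and show that the resulting boundary term $\int_{\partial X^{\lambda}_{\phi,\epsilon}} R_{\gamma}\wedge\tilde{\omega}$ tends to zero as $\epsilon\to 0$. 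The paper verifies this by checking that $R_{\gamma}$ is locally $\mathcal{O}(\log^2 u)$ in a transverse coordinate $u$ (a 0-current for $n=2$; a 3-chain weighted by such a function for $n=3$), so that $\oint_{|u|=\epsilon}\log^2(u)\,du\to 0$. Without this growth estimate your Stokes step is unjustified, and it is precisely here---not in the vanishing of the $\iota_* r^{\lambda}$ or $\delta_{\Gamma^{\lambda}}$ pieces---that the argument could fail. Your treatment of the $\Gamma^{\lambda}$-correction is fine in spirit: the paper uses the $H_1(\mathscr{J}\setminus\mathscr{J}\cap\mathscr{A})=0$ hypothesis in the same way, to write the residual chain $\tau$ as $\partial C_{\mathscr{J}}$ with $C_{\mathscr{J}}$ avoiding the exceptional locus $\mathcal{E}$ after blowing up $\mathscr{A}$.
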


\begin{proof}[Sketch]
(a) is a restatement of part of \cite[Thm. 3.8]{DK}. The strong-temperedness part of (b) recapitulates \cite[Prop. 4.12]{DK}. We now show that \eqref{28-1} is consistent with the lift constructed in the proof of [loc. cit.]. Assuming first that $\mathscr{A}=\emptyset$, we can extend $\{\ux\}$ to a closed precycle on $Z^n(\X_{\phi}^-\setminus \mathscr{J}\times \mathbb{A}^1,n)$ by taking Zariski closure and (for $n=3$) adding precycles of the form $\left( t,1-t,1-\tfrac{x_{i_1}}{t}\right)_{t\in\PP^1}$ over the ``Steinberg type'' locus $\mathscr{S}$. As in [loc. cit.], one then has $\gamma\in Z^n(\X_{\phi}^- \setminus\mathscr{J}\times \mathbb{A}^1,n+1)$ and (closed) $\Xi\in Z^n(\X_{\phi}^-,n)$ with $\Xi|_{\X_{\phi}^-\setminus \mathscr{J}\times\mathbb{A}^1} = \{\ux\}+\imath^{\mathscr{S}}_*Z + \d\gamma$ $\implies$ $R_{\Xi}|_{\X_{\phi}^-\setminus \mathscr{J}\times\mathbb{A}^1} = R_{\{\ux\}} + \tfrac{1}{2\pi\mathbf{i}}dR_{\gamma} +\imath^{\mathscr{S}}_* R_Z - (2\pi \mathbf{i})^n\delta_{T_{\gamma}}$ and $T_{\Xi} \equiv \d \overline{T_{\gamma}} + \tau$ mod $\rho^{-1}(\mathcal{K})$, with $\tau$ supported on $\mathscr{J}\times (\PP^1,\mathcal{K})$. Arguing as in [loc. cit.], our hypotheses give that $\tau \underset{\text{hom}}{\equiv} 0$ on $\mathscr{J}\times(\PP^1,\mathcal{K})$. Writing this as $\tau\equiv\d C_{\mathscr{J}}$, the ``lift'' of $R_{\Xi}$ given there is $R_{\Xi}' = R_{\Xi} + (2\pi\mathbf{i})^n \delta_{\overline{T_{\gamma}}+C_{\mathscr{J}}}$, so that \[ \left. R_{\Xi} ' \right|_{\X_{\phi}^- \setminus \mathscr{J}\times\mathbb{A}^1} = R_{\{\ux\}} + \tfrac{1}{2\pi\mathbf{i}} dR_{\gamma}.\]Writing $X_{\phi,\epsilon}^{\lambda}$ for $X_{\phi}^{\lambda}$ minus a small tubular neighborhood of $\mathscr{J}$, this leads to \[\tilde{V}_{\phi}(\lambda) = \int_{X_{\phi}^{\lambda}} R_{\Xi}'\wedge \tilde{\omega}_{\phi}^{\lambda} = \lim_{\epsilon \to 0} \left\{ \int_{X_{\phi,\epsilon}^{\lambda}} R_{\{\ux\}}\wedge \tilde{\omega} + \tfrac{1}{2\pi\mathbf{i}} \int_{\d X_{\phi,\epsilon}^{\lambda}} R_{\gamma}\wedge\tilde{\omega} \right\} \]hence \eqref{28-1}, provided $\lim_{\epsilon\to 0}\int_{\d X_{\phi,\epsilon}^{\lambda}} R_{\gamma}\wedge \tilde{\omega}=0.$ Viewing $\gamma$ as a family of curves in $\square^{n+1}$ over $\X_{\phi}$, the $(n-2)$-current $R_{\gamma}$ is the push-forward of $R_{\{\uz\}}$ ($\uz = $ coordinates on $\square^{n+1}$). If $n=2$, and $n$ is a local holomorphic coordinate about $\mathscr{J}$, one checks that $R_{\gamma}$ is locally $\mathcal{O}(\log^2 u)$; clearly $\oint_{|u|=\epsilon}\log^2 (u)du\to 0$. For $n=3$ (with $|(u)_0|=\mathscr{J}$ locally), $R_{\gamma}$ is the current of integration over a 3-chain times a locally-$\mathcal{O}(\log^2(u))$ function, with the same result. Finally, in the event that $\mathscr{A}$ is nonempty, and $\widetilde{\X_{\phi}^-}$ the blowup along $\mathscr{A}\times\PP^1$ (with exceptional divisor $\mathcal{E}$, one replaces all complexes (of higher Chow cycles and currents) on $\X_{\phi}^-$ by cone (double-)complexes for the morphism $\mathcal{E}\longrightarrow \mathscr{A}\amalg \widetilde{\X_{\phi}^-}$. The assumption that $H_1(\mathscr{J}\setminus\mathscr{J}\cap\mathscr{A})=\{0\}$ allows $C_{\mathscr{J}}$ to be drawn so as to avoid $\mathcal{E}$.
\end{proof}

\noindent This theorem is likely far from optimal, but suffices for the applications in the next section.

Turning to the computation of $\tilde{V}_{\phi}(0)$, we have the following specialization result:\footnote{This may be viewed an analogue of \cite[Cor. 5.3]{7K}, but on the level of lifts (i.e. as an exact equality of complex numbers).}

\begin{cor}\label{cor3}
For $\phi$ as in Theorem \ref{th3}(b), if $\phi(-\ux)$ has all positive coefficients, then\[\tilde{V}_{\phi}(0)=\int_{\psi} R_{\{\ux\}}|_{X_{\phi}^0}\]for any $(n-1)$-cycle $\psi\subset X_{\phi}^0 \setminus \mathscr{J}$ representing $Q_0 \hat{\vf}_{n-1} \in H_{n-1}(X_{\phi}^0)$, the class \textup{(}from $\S$\ref{S3.proof}, Step 2\textup{)} with intersection number $\pm1$ against ${\vf}_0$.\footnote{One should think of $\psi$  as a membrane ``stretched once around'' $X_{\lambda}^0$.}
\end{cor}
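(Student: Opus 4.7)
The plan is to substitute $\lambda=0$ directly into the formula \eqref{28-1} provided by Theorem \ref{th3}(b) and to identify the resulting integral as a homology--cohomology pairing on the degenerate fiber $X_\phi^0$. First I would verify that $0\in U_\phi$: by the analysis of Step 1 in the proof of Theorem \ref{th2}, the positivity hypothesis on the coefficients of $\phi(-\ux)$ forces $\phi(T_{\ux})\subset[1,\infty]$, so that $U_\phi$ contains the disk $D_{\mathfrak{r}_\phi^{-1}}$ about $\lambda=0$. Thus \eqref{28-1} may be evaluated at $\lambda=0$, giving
\[
\tilde{V}_\phi(0)\;=\;\int_{X_\phi^0} R_{\{\ux\}}\wedge\tilde{\omega}_\phi^0.
\]

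Next I would interpret the right-hand side as the pairing of the cohomology class represented by $R_{\{\ux\}}|_{X_\phi^0}$ with the homology class of $\tilde{\omega}_\phi^0$, using the polarization $Q$ to identify $H^{n-1}\cong H_{n-1}$ on generic fibers and the canonical extension $\V_{\phi,e}^0$ at the MUM point. The identification $[\tilde{\omega}_\phi^0]\equiv Q_0\hat{\vf}_{n-1}$ in $H_{n-1}(X_\phi^0)$ was already obtained in the paragraph preceding \eqref{20-2}, using the MUM structure and the normalization $A_\phi(0)=1$. Meanwhile, by Theorem \ref{th3}(b) (and its proof), the current $R_{\{\ux\}}|_{X_\phi^0}$ represents $\tilde{\nu}_\phi^0$ modulo coboundary currents supported on $\mathscr{J}$. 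Combining these gives $\tilde{V}_\phi(0)=\int_\psi R_{\{\ux\}}|_{X_\phi^0}$ for any cycle $\psi\subset X_\phi^0\setminus\mathscr{J}$ representing $Q_0\hat{\vf}_{n-1}$. Independence of $\psi$ follows from the fact that any two such representatives differ by an $n$-boundary in $X_\phi^0\setminus\mathscr{J}$ and that $R_{\{\ux\}}|_{X_\phi^0}$ is closed off $\mathscr{J}$, which is guaranteed by the cyclotomic/$\QQ$-Steinberg face-polynomial hypotheses of Theorem \ref{th3}(a).

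The main technical obstacle will be rigorously justifying the passage to the limit $\lambda\to 0$: the fiber $X_\phi^0$ is the toric boundary $\DD_\Delta$ and is highly singular, so the classes involved must be interpreted via the canonical extension of $\V_\phi$ rather than on $X_\phi^0$ itself. I would handle this by working inside $\V_{\phi,e}$, where both $\tilde{\nu}_\phi^\lambda$ and $[\tilde{\omega}_\phi^\lambda]$ extend holomorphically to $\lambda=0$ (the former by the single-valuedness on $U_\phi$ established in Step 1, the latter because $\tilde{\omega}_\phi$ is a section of $\F^{n-1}\V_{\phi,e}$). The paired function $\langle\tilde{\nu}_\phi^\lambda,\tilde{\omega}_\phi^\lambda\rangle=\tilde{V}_\phi(\lambda)$ then extends continuously through $\lambda=0$ to $\langle \tilde{\nu}_\phi^0,\tilde{\omega}_\phi^0\rangle$, and the concrete expression $\int_\psi R_{\{\ux\}}|_{X_\phi^0}$ follows from the two identifications of the previous paragraph, once one checks that the correction currents needed to close $R_{\{\ux\}}$ near $\mathscr{J}$ do not contribute because $\psi$ has been chosen to avoid $\mathscr{J}$.
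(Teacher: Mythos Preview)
Your plan is essentially the same as the paper's: verify $0\in U_{\phi}$ from the positivity hypothesis, evaluate \eqref{28-1} at $\lambda=0$, use $[\tilde{\omega}_{\phi}^{0}]\equiv Q_0\hat{\vf}_{n-1}$, and then replace $\tilde{\omega}_{\phi}^{0}$ by $\delta_{\psi}$ in the integral. The one place where your outline is less concrete than the paper is the last step. You phrase it as ``$R_{\{\ux\}}|_{X_{\phi}^{0}}$ represents $\tilde{\nu}_{\phi}^{0}$ modulo coboundary currents supported on $\mathscr{J}$,'' and then pair with the homology class. The paper instead runs a direct Stokes argument: it chooses an $(n-2)$-current $\mathcal{R}$ on $X_{\phi}^{0}$ with $d\mathcal{R}=\tilde{\omega}_{\phi}^{0}-\delta_{\psi}$, uses the hypothesis on $\mathscr{J}$ (that its $H_{n-2}$, resp.\ $H_1$, vanishes) to arrange $\mathcal{R}|_{\mathscr{J}}=0$, and then shows $\lim_{\epsilon\to 0}\int_{\partial X_{\phi,\epsilon}^{0}} R_{\{\ux\}}\wedge\mathcal{R}=0$, yielding
\[
\int_{X_{\phi}^{0}} R_{\{\ux\}}\wedge \tilde{\omega}_{\phi}^{0}\;=\;\int_{X_{\phi}^{0}} R_{\{\ux\}}\wedge \delta_{\psi}\;=\;\int_{\psi} R_{\{\ux\}}.
\]
This is exactly the ``check that the correction currents near $\mathscr{J}$ do not contribute'' you allude to, but the mechanism is on the $\tilde{\omega}$--side (choosing a good primitive $\mathcal{R}$), not on the $R_{\{\ux\}}$--side. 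Your abstract pairing argument would need an analogous justification, since $R_{\{\ux\}}|_{X_{\phi}^{0}}$ is not literally closed on all of $X_{\phi}^{0}$ and the integral $\int_{X_{\phi}^{0}}R_{\{\ux\}}\wedge\tilde{\omega}_{\phi}^{0}$ is over the whole fiber, not just the complement of $\mathscr{J}$; the primitive $\mathcal{R}$ is precisely what bridges that gap. Otherwise your proposal matches the paper's proof.
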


\begin{proof}[Sketch]
The additional hypothesis on $\phi$ ensures that $\lambda=0$ belongs to $U_{\phi}$. Now $Z^0 = \text{Res}_{X^0_{\phi}}(\mathcal{I}^*\Xi)=\mathcal{I}^*\text{Res}_{X_{\phi,0}}(\Xi) = \mathcal{I}^* Z_0$ has $\Omega_{Z^0} = \mathcal{I}^* \Omega_{Z_0} = (2\pi\mathbf{i})^{n-1} \mathcal{I}^* \omega_0 = (2\pi\mathbf{i})^{n-1}\tilde{\omega}^0$, while $T_{Z^0}$ is an $(n-1)$-cycle with $dR_{Z^0} = \Omega_{Z^0} - (2\pi\mathbf{i})^{n-1}\delta_{T_{Z^0}}$ $\implies$ $[T_{Z^0}]=[\tilde{\omega}^0]=Q_0\hat{\vf}_{n-1}$ in homology. So for $\psi$ as above, there exists an $(n-2)$-current $\mathcal{R}$ on $X_{\phi}^0$ with $d\mathcal{R} = \omega^0 - \delta_{\psi}$, with closed hence (by hypothesis on $\mathscr{J}$) exact restriction to $\mathscr{J}$. We may therefore assume that $\mathcal{R}$ (is of intersection type with respect to $\mathscr{J}$ and) pulls back to $0$ on $\mathscr{J}$, so that $\lim_{\epsilon\to 0}\int_{\d X_{\phi,\epsilon}^0} R_{\{\ux\}}\wedge \mathcal{R} = 0$. It now follows that $\tilde{V}_{\phi}(0) = \lim_{\epsilon \to 0} \int_{X_{\phi,\epsilon}^0} R_{\{\ux\}}\wedge \tilde{\omega}_{\phi}^0 = \lim_{\epsilon\to 0} \int_{X_{\phi,\epsilon}^0} R_{\{\ux\}} \wedge \delta_{\psi}$ as claimed.
\end{proof}

\section{Examples and near-examples}\label{S5}
Here we record some Laurent polynomials that satisfy the conditions of Theorems \ref{th2} and \ref{th3}, as well as a few which stray close enough to warrant attention.

\subsection{$n=1$}
Let $b\in \ZZ_{>0}$, $a=2b+1$, and set \[\phi(x):=-x+a+\frac{1-a^2}{4x}.\]$X_{\phi}^{\lambda}$ is a pair of points $\{ p_+^{\lambda},p_-^{\lambda}\}$ which are distinct unless $\lambda$ is a root of $P_a(\lambda):=\lambda^2 -2a\lambda + 1$, and $\X_{\phi}$ has involution\footnote{of course, it is its own facile 1-cover!}\[(x,\lambda)\;\longmapsto \; \left( \lambda^{-1}x + \tfrac{1}{2}(1-\lambda^{-1})(a+1),\lambda^{-1}\right).\]We have $p_{\pm}^{\lambda}=\tfrac{1}{2}(a-\lambda \pm P_a(\lambda)^{1/2})$, and the 0-form\[\tilde{\omega}_{\phi}^{\lambda} = \lambda^{-1}\omega^{\lambda}_{\phi} = \frac{\pm 1}{2\sqrt{P_a(\lambda)}}\;\;\;\text{on}\;\;\;p_{\pm}^{\lambda}\]has period \[A(\lambda)=\int_{p_+^{\lambda}-p_-^{\lambda}}\tilde{\omega}_{\phi}^{\lambda} = \frac{1}{\sqrt{P_a(\lambda)}}.\]The regulator 0-current $R_{\lambda}=\log(p_{\pm}^{\lambda})$ on $p_{\pm}^{\l}$, and so the HNF is\[\tilde{V}_{\phi}(\l) = \frac{1}{\sqrt{P_a(\l)}}\log (p_+^{\l}/p_-^{\l}).\]Since $\mathfrak{r}_{\phi}=a-\sqrt{a^2 - 1}<e^{-1}$ for all $b\geq 1$, we conclude by Theorem \ref{th2} that \[\tilde{V}_{\phi}(0)=\log\left( \frac{b+1}{b}\right)\notin\QQ.\]

\subsection{$n=2$}\label{SS5.2}
Let \[\phi(x_1,x_2):= x_1^{-1}x_2^{-1} (1-x_1)(1-x_2)(1-x_1-x_2);\]the picture\[\includegraphics[scale=0.5]{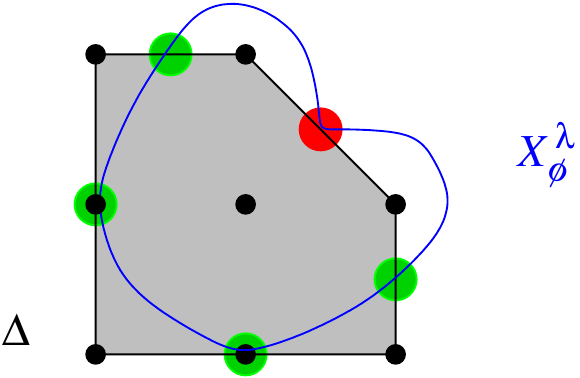}\]indicates the (reflexive) Newton polytope $\Delta$ and $X^{\lambda}_{\phi}\subset \PP_{\Delta}$. The green [resp. red] dots represent $Y_{\phi}\setminus \mathscr{J}$ [resp. $\mathscr{J}$]; edge polynomials are $x^{\sigma}-1$ or $(x^{\sigma}-1)^2$, so $\phi$ is strongly tempered by Theorem \ref{th3}(b).

The singular fibers of $\X_{\phi}$ are over $0$, $\infty$, and $t_{\pm} = \tfrac{-11\pm 5\sqrt{5}}{2}$, with $\mathfrak{r}_{\phi} = t_+ <e^{-2}$ (and $\X_{\phi,t_{\pm}}$ of Kodaira type $I_1$), while\[D_{\phi}(t) = (t^2 + 11t-1)\delta_t^2 + t(2t+11)\delta_t + t(t+3)\]is integral.\footnote{The coefficients of the holomorphic solution are the ``baby Ap\'ery'' sequence $a_m = \sum_{k=0}^m {\binom{m}{k}}^2 \binom{m+k}{k} = 1,\,3,\,19,\,147,\,\ldots .$} Involutivity is ensured by the (order 4) automorphism\[\mathcal{I}:\;(x_1,x_2,t)\longmapsto \left( \tfrac{x_1}{x_1 -1},\tfrac{1-x_2}{1-x_1-x_2},-\tfrac{1}{t}\right) ,\]while the facile 2-cover given by\[(1-\xi^2) + \xi y_1 + \xi^2 y_2 -\xi y_1 y_2 - \xi y_1^{-1} y_2^{-1} = 0\]maps down by\[\mathcal{J}:\; (y_1,y_2,\xi)\longmapsto \left( \tfrac{y_1}{\xi},\tfrac{y_1 - \xi}{y_1 y_2 -\xi},\xi^2 \right) .\]To compute the special value $\tilde{V}_{\phi}(0)$ using Corollary \ref{cor3}, note that $X^0 = \{ x_1 =1 \} \cup \{ x_2 =1 \} \cup \{ x_1 + x_2 = 1\}$, with $\psi$ going ``once around'' the figure. Furthermore, $R_{\{\ux\}} = \log(x_1 )\tfrac{dx_1}{x_1} - 2\pi\mathbf{i} \log (x_2)\delta_{T_{x_1}}$ vanishes on the first two components. Parametrizing the remaining component of $-\psi$ by $[0,1]\ni s \mapsto (1-s,s)$, we conclude (by Theorem \ref{th2}) that \[\tilde{V}_{\phi}(0) = -\int_0^1 \log(1-s)\tfrac{ds}{s} = \mathrm{Li}_2 (1) = \tfrac{\pi^2}{6} \notin \QQ.\]

\begin{rem}
In view of the results of Zagier's search for recurrencies of Ap\'ery type \cite{Za}, it seems likely that this $\phi$ is the unique example for $n=2$ that satisfies the conditions of Proposition \ref{prop}. One can match tempered Laurent polynomials (hence higher normal functions) to the sporadic examples of [op. cit.], but outside case ``D'' (just treated), both the bound on $\mathfrak{r}_{\phi}$ and involutivity fail. For instance, case ``E'' of [op. cit.] is $\phi(\ux) = (x_1 + x_1^{-1})(x_2 + x_2^{-1}) + 4$; this has $\tilde{V}_{\phi}(0) = 2G$ (Catalan), but $\mathfrak{r}_{\phi} C = \tfrac{1}{8}\cdot 32 = 4$ (too big), and the Kodaira fiber types at $\lambda_0$ and $C/\lambda_0$ (or $0$ and $\infty$) don't match, so that $\V_{\phi} \ncong \mathcal{I}^* \V_{\phi}$. This non-involutivity is not a problem for the approach via modular forms, which gives a different means for obtaining period expansions about any cusp; we are trading off this advantage for (at least in principle) the ability to treat non-modular families in higher dimension.
\end{rem}

\begin{rem}
One other ``near-example'' related to Catalan's constant arises from work of Zudilin \cite{Z3}, who found an Ap\'ery-like recurrence with rational solutions $a_m, b_m$ whose ratios $b_m/a_m$ converge rapidly to $G$. With some work, one can write the generating series $\sum_{m\geq 0} (a_m G - b_m) \lambda^m$ as a normal function associated to a higher cycle on a family of open\footnote{The corresponding Laurent polynomial is neither reflexive nor tempered; two points are removed from each fiber.} genus-9 curves, which are branched 4:1 covers of the ``baby Ap\'ery'' family of elliptic curves above! By construction, this has $V(0)=G$.
\end{rem}

\subsection{$n=3$}\label{SS5.3}
The Newton polytope $\Delta$ of
\begin{multline}\label{35-1}
\phi(x_1,x_2,x_3)=\\ x_1^{-1}x_2^{-1}x_3^{-1}(x_1 - 1)(x_2 - 1)(x_3 - 1)(1-x_1 -x_2 +x_1 x_2 -x_1 x_2 x_3)
\end{multline}
and its dual are \[\includegraphics[scale=0.5]{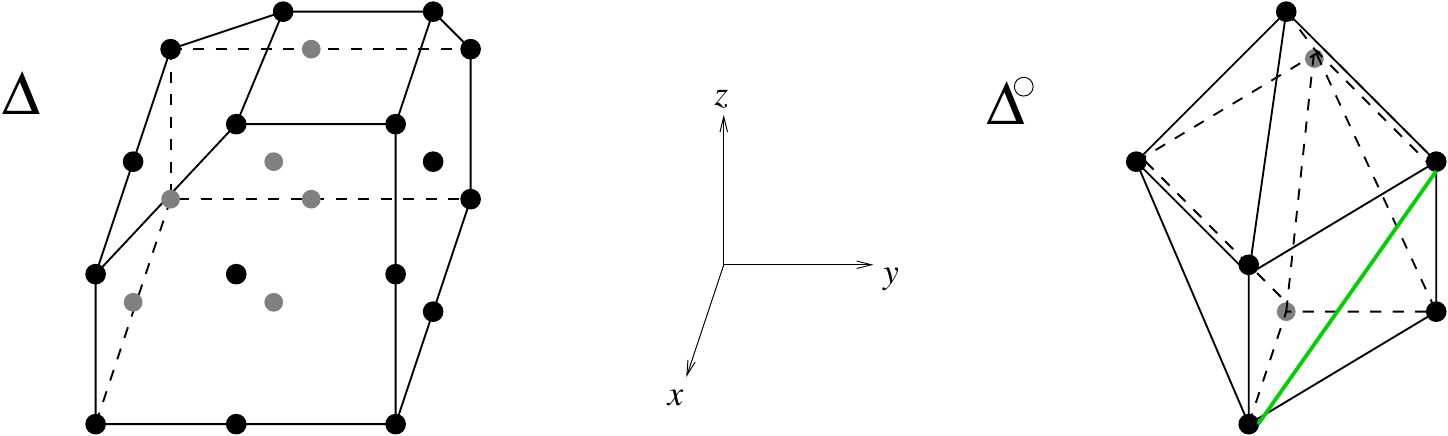}\]showing reflexivity; maximal triangulation adds the green edge. The edge and facet polynomials are products of $(x^{\sigma}_i - 1)$, $(x_i^{\sigma}-1)^2$, and $(1-x_1^{\sigma} \pm x_2^{\sigma})$, and $\mathscr{A}$ (red), $\mathscr{J}$ (blue), and $\mathscr{I}$ (green) are as depicted:\[\includegraphics[scale=0.5]{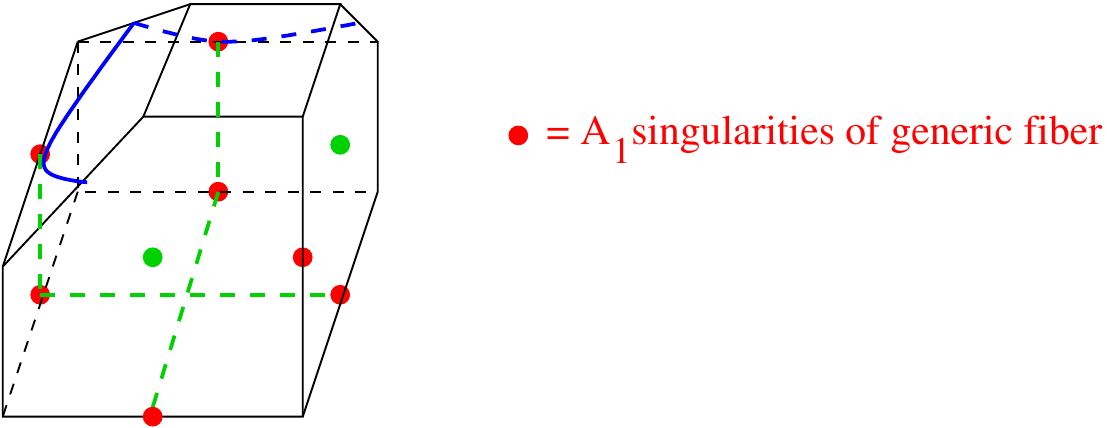}\]In particular, $\mathscr{J}\setminus\mathscr{J}\cap\mathscr{A}$ is two copies of $\mathbb{A}^1$ attached at a point, and we conclude by Theorem \ref{th3}(b) that $\phi$ is strongly tempered.

Singular fibers are at $0$, $\infty$, $t_{\pm}=(\sqrt{2}\pm 1)^4$, and $1$; the last of these does not contribute to monodromy of $\VV_{\phi}$, and so $\delta_{\phi} = 2$, while $\mathfrak{r}_{\phi}=t_- < e^{-3}$. To see that the generic Picard rank is $19$, one can use a torically-induced elliptic fibration (cf. \cite[$\S$2]{Ke}). The Picard-Fuchs operator is\[D_{\phi}(t) = (t^2 -34t+1)\delta_t^3 + 3t(t-17)\delta_t^2 + 3t(t-9)\delta_t + t(t-5),\]and the $\{a_m\}$ the famous Ap\'ery sequence $a_m = \sum_{k=0}^m {\binom{m}{k}}^2 {\binom{m+k}{k}}^2 = 1,5,73,1445,\ldots .$

Changing coordinates by $\sx_i = \tfrac{x_i}{x_i - 1}$ brings $1-t\phi(\ux)=0$ into the form studied by Beukers and Peters \cite{BP}. By the results of Peters and Stienstra \cite{PS}, $\X_{\phi,t}$ thus has a (facile) 2-cover by the \emph{Fermi family}\[\rx_{\xi} := \left\{ \xi \sum_{i=1}^3 (y_i + y_i^{-1}) + 1 + \xi^2 =0 \right\}.\]It also has an involution, by \[\mathcal{I}:\; (x_1,x_2,x_3,t)\longmapsto \left( \tfrac{x_3}{x_3 -1},\tfrac{(x_1 - 1)(x_2 -1 )}{1-x_1 -x_2 + x_1 x_2 -x_1 x_2 x_3 },\tfrac{x_1}{x_1 - 1},\tfrac{1}{t} \right) .\]

The 2-current \[R_{\{\ux\}} = \log(x_1)\tfrac{dx_2}{x_2} \wedge \tfrac{dx_3}{x_3} + (2\pi \mathbf{i})\log(x_2) \tfrac{dx_3}{x_3} \delta_{T_{x_1}} + (2\pi \mathbf{i})^2 \log(x_3) \delta_{T_{x_1}\cap T_{x_2}}\]vanishes on the components $\{ x_i = 1\}$ of $X_{\phi}^0$. The piece of $\psi$ on the remaining component $x_3 = \tfrac{(1-x_1)(1-x_2)}{x_1x_2}$ is parametrized by $0\leq r\leq s\leq 1\; \longmapsto \;(1-r,s,\tfrac{r(1-s)}{s(1-r)})$, and so (invoking Cor. \ref{cor3} and Thm. \ref{th2})
\begin{flalign*}
\tilde{V}_{\phi}(0) &= \int_{0\leq r\leq s\leq 1} \log(1-r)\text{dlog}(s)\wedge \text{dlog}(\tfrac{r}{1-r}) \\
&= -\int_0^1 \log(1-r)\log(r)\text{dlog}(\tfrac{r}{1-r}) \\
&= -2\int_0^1 \log(1-r)\log(r) \text{dlog}(r)\\
&= 2 \sum_{k\geq 1} \frac{1}{k} \int_0^1 r^{k-1} \log(r) dr \; \;= \;\;-2\zeta(3)\; \notin \;\QQ.
\end{flalign*}

\begin{rem}
In our normalization (see the definition of $b_m$ in $\S$\ref{S3.proof}, Step 1) one has $\{b_m\} = 0, -12, -\tfrac{351}{2}, -\tfrac{62531}{18},\,\ldots$, which is $-2$ times the usual second Ap\'ery sequence.  We note, however, that explicit knowledge of $\{a_m\}$ and $\{b_m\}$ is not needed for proving irrationality of $\zeta(3)$ via Theorem \ref{th2}.
\end{rem}

\begin{rem}
There are at least three ``near-examples'' for $n=3$, identified in \cite{dS} (and closely related to \cite{Go}), which satisfy all the criteria in Proposition \ref{prop} that we have checked, except for the bound: writing $\phi_{\rm{I}}$ for \eqref{35-1}, these are
\begin{flalign*}
\phi_{\rm{II}} &= (1-x_1 - x_2 -x_3)(1-x_1^{-1})(1-x_2^{-1})(1-x_3^{-1}) \\
\phi_{\rm{III}} &= (x_1 + x_2 +x_3)(-1+x_1^{^{-1}}\mspace{-5mu} + x_2^{^{-1}}\mspace{-5mu} + x_3^{^{-1}}\mspace{-5mu} - x_1^{^{-1}}x_2^{^{-1}} - x_1^{^{-1}}x_3^{^{-1}} - x_2^{^{-1}}x_3^{^{-1}}) \\
\phi_{\rm{IV}} &= (1-x_1 -x_2 -x_3)(1-x_1^{-1}-x_2^{-1}-x_3^{-1}) .
\end{flalign*}
They are reflexive, tempered, and involutive, with $C_{\rm{I}}=1$, $C_{\rm{II}}=16$, $C_{\rm{III}}=-27$, and $C_{\rm{IV}}=64$; while $\tilde{V}(0) \underset{\QQ^{\times}}{\sim} \zeta(3)$ except for $\phi_{\rm{III}}$, where $\tilde{V}(0) \underset{\QQ^{\times}}{\sim} L(\chi_{-3},3)\underset{\QQ^{\times}}{\sim} \pi^3 /\sqrt{3}$.  For $\mathfrak{r}_{\phi}|C|e^3$ we obtain $\approx 0.59$, $13.78$, $27.97$, resp. $80.34$, which satisfies the required bound ($<1$) only in the first case.
\end{rem}

\subsection{Higher dimension?}\label{S5.4}
Here we propose two sources for examples with $n\geq 4$, if one is prepared to weaken the hypotheses as in the last paragraph of $\S$\ref{S3.last}. In both cases, the Laurent polynomials considered, while not in general reflexive, all have Newton polytope $\Delta \subset [-1,1]^n$ having the origin as unique integer interior point. Details, proofs, and further developments will appear elsewhere.

Define the \emph{VZ polynomials} $\{\phi_n\}$ inductively by $\psi_1 = 1$,\[\psi_n (x_1,\ldots,x_n) :=x_1\cdots x_n + (1-x_n) \psi_{n-1}(x_1,\ldots,x_{n-1}),\]\[\phi_n(\ux) := (1-x_1^{-1})\cdots (1-x_n^{-1}) \psi_n (\ux) ;\]they are obtained (by substituting $\sx_i :=\tfrac{x_i}{x_i -1}$) from denominators of integrals first considered by Vasilyev \cite{Va} and Zudilin \cite{Z2} in their works on linear forms in zeta values. For $n=2$ and $3$, this recovers (up to inversion and permutation of coordinates) the Ap\'ery polynomials above. For $n=5$, we expect that $\phi_5$ is strongly tempered, and note that Hodge numbers of $\V_{\phi}$ are $(1,1,2,1,1)$. This means there are \emph{two} invariant periods $A(\lambda) = 1+\sum_{m\geq 1} a_m\lambda^m$, $B(\lambda) = \lambda +\sum_{m\geq 2} b_m \lambda^m$ about the maximal unipotent monodromy point, as $N$ has two primitive classes. Writing $V(\lambda)$ for the HNF, one then expects $\mathfrak{r}_{\phi}<e^{-5}$, and
\begin{flalign*}
C(\lambda) &:= -V(\l) + A(\l)V(0) + B(\l)\left( -V(0) A'(0) + V'(0)\right) \\
&= \sum_{m\geq 2} c_m \l^m
\end{flalign*}
to satisfy an inhomogeneous equation as above. Combining this with Vasilyev's results, one would conclude that\[V(\l)=\sum_{m\geq 0} \left( 2a_m \zeta(5) + b_m \zeta(3) - c_m \right) ,\]where $a_m,L_m^2 b_m,L_m^5 c_m \in \ZZ$, with the innocuous consequence that ``at least one of $\zeta(3)$ and $\zeta(5)$ is irrational.''

Recent work of F. Brown \cite{Br} provides another source of interesting Laurent polynomials. Given a permutation $\pi \in \mathfrak{S}_{n+3}$, write formally \[\theta_{\pi}(z_1,\ldots,z_{n+3}):= \prod_{i\in \ZZ/(n+3)\ZZ} (z_{\pi(i)}-z_{\pi(i+1)}),\]\[x_j^{\pi}:=-\text{CR}\left( z_{\pi(1)},z_{\pi(n+2-j)},z_{\pi(n+3-j)},z_{\pi(n+4-j)}\right),\]where $j=1,\ldots,n$, and \[\Omega_{\pi} :=\frac{dx_1^{\pi}}{x_1^{\pi}}\wedge \cdots \wedge \frac{dx_n^{\pi}}{x_n^{\pi}} ;\]if $\pi=\text{Id}$ then we drop the sub- and superscripts. Now let $\sigma \in \mathfrak{S}_{n+3}$ be a \emph{convergent} permutation in the sense of [op. cit.]; namely, we assume that for any $i\in \ZZ/(n+3)\ZZ$ and $2\leq k \leq n+1$, $\{ \sigma(i),\ldots,\sigma(i+k)\}$ is not a consecutive sequence of integers mod $(n+3)$. It turns out that $\theta_{\sigma}(\uz)/\theta(\uz)$ can be written as a Laurent polynomial $\phi_{\sigma}(x_1,\ldots,x_n)$ (with Newton polytope $\Delta_{\sigma}$), and the \emph{basic cellular integrals} on $\mathcal{M}_{0,n}$ of [op. cit.] become the integrals\[ I_{\sigma}^{(k)} := \int_{T_{\{\ux\}}}\phi_{\sigma}(\ux)^{-k} \Omega_{\sigma}\]on $\PP_{\Delta_{\sigma}}$. Defining $X_{\sigma}^{\l}\subset \PP_{\Delta_{\sigma}}$ by $\l=\phi_{\sigma}(\ux)$, the generating series 
\begin{flalign*}
V_{\sigma}(\l)&:= (2\pi\ay)^{1-n}\sum_{k\geq 0} I_{\sigma}^{(k)} \\
&=(2\pi\ay)^{1-n}\int_{T_{\{\ux\}}} \frac{dx_1/x_1\wedge \cdots \wedge dx_n/x_n}{\l-\phi_{\sigma}(\ux)}
\end{flalign*}
may be rewritten (using integration by parts) in the form \[=\int_{X_{\sigma}^{\l}} \left. R_{\{\ux\}} \right|_{X_{\sigma}^{\l}} \wedge \tilde{\omega}^{\l} ,\]which is a truncated HNF under a strong temperedness hypothesis. Finally, involutivity may be arranged via the additional hypothesis that $\sigma^{-1} = \pi_1 \circ \sigma \circ \pi_2$, with $\pi_1,\pi_2$ belonging to the dihedral group $D_{n+3}$.

\curraddr{${}$\\
\noun{Department of Mathematics, Campus Box 1146}\\
\noun{Washington University in St. Louis}\\
\noun{St. Louis, MO} \noun{63130, USA}}

\email{${}$\\
\emph{e-mail}: matkerr@math.wustl.edu}
\end{document}